\theoremstyle{change} 
\newtheorem{theorem}{Theorem}[section] 
\newtheorem{proposition}[theorem]{Proposition}
\newtheorem{corollary}[theorem]{Corollary}
\newtheorem{remark}[theorem]{Remark}
\newtheorem{example}[theorem]{Example}
\newtheorem{definition}[theorem]{Definition}
\newtheorem{notation}[theorem]{Notation}
\newtheorem{remark and notation}[theorem]{Remark and Notation}
\newenvironment{proof}{\noindent{\bf Proof}\ }{\qed\bigskip}
\renewcommand{\le}{\leqslant}
\renewcommand{\ge}{\geqslant} 
\newcommand{\Aut}{\mathrm{Aut}}
\newcommand{\BP}{\mathcal{BP}}
\newcommand{\Br}{\mathrm{Br}}
\newcommand{\btilde}{\tilde{b}}
\newcommand{\calC}{\mathcal{C}}
\newcommand{\calCtilde}{\tilde{\calC}}
\newcommand{\calF}{\mathcal{F}}
\newcommand{\calFtilde}{\tilde{\calF}}
\newcommand{\End}{\mathrm{End}}
\newcommand{\etilde}{\tilde{e}}
\newcommand{\FF}{\mathbb{F}}
\newcommand{\ftilde}{\tilde{f}}
\newcommand{\Gal}{\mathrm{Gal}}
\newcommand{\Hom}{\mathrm{Hom}}
\newcommand{\lexp}[2]{\setbox0=\hbox{$#2$} \setbox1=\vbox to
                 \ht0{}\,\box1^{#1}\!#2}
\newcommand{\myiso}{\buildrel\sim\over\to}
\newcommand{\psihat}{\hat{\psi}}
\newcommand{\qed}{\nobreak\hfill
                   \vbox{\hrule\hbox{\vrule\hbox to 5pt
                   {\vbox to 8pt{\vfil}\hfil}\vrule}\hrule}}
\newcommand{\stab}{\mathrm{stab}}
\newcommand{\Tr}{\mathrm{Tr}}
\newcommand{\varphihat}{\hat{\varphi}}
\newcommand{\ZZ}{\mathbb{Z}}
\title{Fusion systems of blocks of finite groups over arbitrary fields\footnote{{\bf MR Subject Classification:}  
20C20. {\bf Keywords:}  Blocks of finite groups; fusion systems}}
\author{\small Robert Boltje, \c{C}isil Karag\"uzel, Deniz Y\i lmaz\\
  \small Department of Mathematics\\
  \small University of California\\
  \small Santa Cruz, CA 95064\\
  \small U.S.A.\\
  \small boltje@ucsc.edu, ckaraguz@ucsc.edu, deyilmaz@ucsc.edu}
\date{June 20, 2019}
\begin{document}
\sloppy


\maketitle


\begin{abstract}
\noindent
To any block idempotent $b$ of a group algebra $kG$ of a finite group $G$ over a field $k$ of characteristic $p>0$, Puig associated a fusion system and proved that it is saturated if the $k$-algebra $kC_G(P)e$ is split, where $(P,e)$ is a maximal $kGb$-Brauer pair. We investigate in the non-split case how far the fusion system is from being saturated by describing it in an explicit way as being generated by the fusion system of a related block idempotent over a larger field together with a single automorphism of the defect group.
\end{abstract}


\section{Introduction}\label{sec intro}
Let $k$ be a field of characteristic $p$, let $G$ be a finite group and let $b$ be a block idempotent of $kG$. Puig defined a fusion system $\calF_{(P,e)}(kGb)$ associated to $kGb$ after choosing a maximal $kGb$-Brauer pair $(P,e)$. Up to category isomorphism, this fusion system does not depend on the choice of $(P,e)$. Puig also proved that $\calF_{(P,e)}(kGb)$ is saturated if the $k$-algebra $kC_G(P)e$ is split. It is known that in the non-split case it can happen that the fusion system associated to $kGb$ is not saturated. In fact, the Sylow axiom can fail, while the extension axiom always holds. In the Main Theorem~\ref{thm A} of this paper we establish a precise connection between the fusion systems of related blocks in a Galois extension $L/K$ of fields of characteristic $p$ with Galois group $\Gamma$. More precisely, let $b$ be a block idempotent of $LG$ and $\btilde$ the unique block idempotent of $KG$ with $b\btilde=b$. Moreover, let $(P,e)$ be a maximal $LGb$-Brauer pair and let $\etilde$ be the unique block idempotent of $KC_G(P)$ with $e\etilde=e$. Then $(P,\etilde)$ is a maximal $KG\btilde$-Brauer pair and one has an inclusion of the fusion systems 
\begin{equation*}
  \calF:=\calF_{(P,e)}(LGb)\subseteq \calF_{(P,\etilde)}(KG\btilde)=:\calFtilde\,.
\end{equation*}
Theorem~\ref{thm A} states that there exists an element $\sigma\in\Aut_{\calFtilde}(P)$ such that $\calFtilde=\langle\calF,\sigma\rangle$. As consequences of the nature of $\sigma$ we obtain that $\calFtilde$ is saturated if and only if $\calF$ is saturated and $p$ does not divide the index $[\Gamma_b:\Gamma_e]=[K(e):K(b)]$ of the stabilizers of $b$ and $e$ under the Galois action, or equivalently the degree of the field extensions after adjoining the coefficients of $e$ and $b$ to $K$. In the case that $L$ is chosen such that $LC_G(P)e$ is split, this gives a particularly handy criterion for a fusion system of a block $KG\btilde$ in the non-split case to be saturated, see Theorem~\ref{thm C}. The main result allows an alternative easy proof for the known fact that the extension axiom holds also in the non-split case, see Theorem~\ref{thm B}. Finally, the Main Theorem implies that a weak form of Alperin's fusion theorem holds also for arbitrary block fusion systems, see Theorem~\ref{thm D}.

\begin{notation}
We will use the following standard notations without further notice:

\smallskip
For a group $G$ and $x\in G$, we denote by $c_x\colon G\to G$ the conjugation map $g\mapsto xgx^{-1}$. If $k$ is a commutative ring, its $k$-linear extension to the group algebra $kG$ is again denoted by $c_x\colon kG\to kG$. We frequently will use left-exponential notation  $\lexp{x}{(-)}:=c_x$ for these maps. The maps $c_x$, $x\in G$, define an action of $G$ on $kG$ via $k$-algebra homomorphisms.

\smallskip
For $H\le G$, we denote by $[G/H]$ a set of representatives of the cosets $G/H$.

\smallskip
If a group $G$ acts on a set $X$, we usually denote the stabilizer of an element $x\in X$ by $G_x$. Moreover, for $H\le G$, we denote by $X^H$ the set of $H$-fixed points of $X$.
\end{notation}


\section{Brauer pairs}\label{sec Brauer pairs for block algebras}

Throughout this section, $G$ denotes a finite group, $k$ denotes a field of characteristic $p>0$, and $b$ denotes a block idempotent of $kG$, i.e., a primitive idempotent of $Z(kG)$. We recall the definition and properties of Brauer pairs for $kG$ following the treatment in \cite[IV.2]{AKO2011}. We note that the blanket assumption in \cite[IV.2]{AKO2011} that $k$ is algebraically closed is not used in the proofs of any of the statements that we cite from there. Alternatively, see also \cite[Sections 5.9 and 6.3]{Linckelmann2018}. 

\bigskip
Recall that, for a $p$-subgroup $P$ of $G$, the {\em Brauer homomorphism} with respect to $P$ is the $k$-linear projection map $\Br_P\colon (kG)^P\to kC_G(P)$, $\sum_{g\in G}\alpha_g g\mapsto \sum_{g\in C_G(P)} \alpha_g g$. This is a surjective $k$-algebra homomorphism which respects $G$-conjugation: $c_x\circ\Br_P=\Br_{\lexp{x}{P}}\circ c_x\colon (kG)^P\to kC_G(\lexp{x}{P})$ for $x\in G$. Thus, $\Br_P(b)$ is an idempotent of $Z(kC_G(P))=(kC_G(P))^{C_G(P)}$. Recall further that a {\em $kG$-Brauer pair} is a pair $(P,e)$ consisting of a $p$-subgroup $P$ of $G$ and a block idempotent $e$ of $kC_G(P)$. If $e$ occurs in the unique decomposition of $\Br_P(b)$ into a sum of primitive idempotents of $Z(kC_G(P))$ (that is, if $\Br_P(b)e=e$), then we call $(P,e)$ a {\em $(kG,b)$-Brauer pair}. We denote by $\BP(kG)$ the set of $kG$-Brauer pairs and by $\BP(kG,b)$ the set of $(kG,b)$-Brauer pairs. Clearly, $\BP(kG)$ is the disjoint union of the subsets $\BP(kG,b)$, where $b$ runs through the block idempotents of $kG$. The set $\BP(kG)$ is a $G$-set under the conjugation action given by $\lexp{x}{(P,e)}:=(\lexp{x}{P},\lexp{x}{e})$, and the subset $\BP(kG,b)$ is $G$-stable. Finally, we say that an idempotent $i$ of $(kG)^P$ is {\em associated} to a $kG$-Brauer pair $(P,e)$ if
\begin{equation*}
  e \Br_P(i) = \Br_P(i)\neq 0\,.
\end{equation*}
Note that if $i$ is primitive in $(kG)^P$ then $e\Br_P(i)\neq 0$ implies that $\Br_P(i)\neq 0$ and that $\Br_P(i)$ is primitive in $kC_G(P)$. Thus, $e\Br_P(i)=\Br_P(i)$.
One writes $(Q,f)\le (P,e)$ if $Q\le P$ and if any primitive idempotent $i$ of $(kG)^P$ which is associated to $(P,e)$ is also associated to $(Q,f)$, see \cite[Definition~2.9]{AKO2011}. This relation has the following properties.

\begin{theorem}(\cite[Theorems~2.10, 2.16]{AKO2011})\label{thm Brauer pairs}
{\rm (a)} Let $(P,e)\in\BP(kG)$ and let $Q\le P$. Then there exists a unique block idempotent $f$ of $kC_G(Q)$ such that $(Q,f)\le (P,e)$.

\smallskip
{\rm (b)} Let $(Q,f)\le(P,e)$ be in $\BP(kG)$ with $Q\trianglelefteq P$. Then $f$ is the unique block idempotent of $kC_G(Q)$ which is $P$-stable and satisfies $Br_P(f)e=e$.

\smallskip
{\rm (c)} The relation $\le$ on $\BP(kG)$ is a partial order which is respected by the conjugation action of $G$.
\end{theorem}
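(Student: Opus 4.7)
The plan is to prove (a) first (essentially by characterizing $f$ intrinsically), then derive (b) from the characterization and (c) using (a)'s uniqueness. The central technical tool is the factorization of Brauer maps: for $Q \trianglelefteq P$, the restriction of $\Br_P$ to $(kG)^P$ coincides with $\Br_{P/Q} \circ \Br_Q$, where $\Br_{P/Q}\colon(kC_G(Q))^{P/Q}\to kC_G(P)$ uses $C_{C_G(Q)}(P) = C_G(P)$. A key consequence in characteristic $p$ is that orbit sums of non-$P$-stable idempotents of $kC_G(Q)$ vanish under $\Br_{P/Q}$: an orbit has size equal to a positive power of $p$, and all orbit members share the same $C_G(P)$-coefficients. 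For a general $Q \le P$ one reduces to the normal case along a subnormal chain $Q = Q_0 \trianglelefteq \cdots \trianglelefteq Q_r = P$, which exists because $P$ is a $p$-group.

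A first key observation for part (a) is that a primitive idempotent $i \in (kG)^P$ with $\Br_P(i) \ne 0$ is automatically primitive in the larger subalgebra $(kG)^Q$: otherwise its primitive decomposition $i = \sum_{j=1}^n i_j$ in $(kG)^Q$ forms a single nontrivial $P/Q$-orbit (by primitivity in $(kG)^P$), giving $\Br_P(i) = n\Br_P(i_1)$ with $n$ a positive power of $p$, hence $\Br_P(i) = 0$, a contradiction. Consequently $\Br_Q(i)$ is a nonzero primitive idempotent of $kC_G(Q)$, lying in a unique block $e_\nu$; moreover $e_\nu$ must be $P$-stable (any conjugate $\lexp{x}{e_\nu} \ne e_\nu$ would satisfy $\lexp{x}{e_\nu}\Br_Q(i) = 0$, contradicting $e_\nu\Br_Q(i) = \Br_Q(i) \ne 0$). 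Applying the factorization gives $\Br_P(e_\nu)\Br_P(i) = \Br_P(i) \ne 0$, so $e\Br_P(e_\nu) \ne 0$, and since $e\Br_P(e_\nu) \in \{0,e\}$ by primitivity of $e$, one obtains $e\Br_P(e_\nu) = e$. This identifies $e_\nu$ as the unique $P$-stable block $f$ of $kC_G(Q)$ satisfying $e\Br_P(f) = e$, establishing both existence and uniqueness in (a).

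For part (b), $f$ is $P$-stable by construction and $\Br_P(f)e = e$ is the characterization itself; uniqueness follows because any $P$-stable block $f'$ of $kC_G(Q)$ with $\Br_P(f')e = e$ satisfies $\Br_P(ff')e = \Br_P(f)e = e \ne 0$, forcing $ff' \ne 0$ and $f = f'$. For part (c), reflexivity is immediate; antisymmetry uses primitivity of block idempotents (with $P = Q$, $e\Br_P(i) = f\Br_P(i) = \Br_P(i) \ne 0$ forces $ef \ne 0$, hence $e = f$); transitivity for $(R,g)\le(Q,f)\le(P,e)$ follows by applying the intrinsic characterization of the unique block step by step along a subnormal chain from $R$ to $P$; and $G$-equivariance reduces to the identity $c_x\circ\Br_P = \Br_{\lexp{x}{P}}\circ c_x$ and the fact that $c_x$ sends primitive idempotents of $(kG)^P$ to primitive idempotents of $(kG)^{\lexp{x}{P}}$.

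I expect the main obstacle to be establishing the Brauer map factorization $\Br_P = \Br_{P/Q}\circ\Br_Q$ together with the associated orbit-sum vanishing property, and carefully reducing the non-normal case $Q\le P$ to the normal case via a subnormal chain while maintaining the intrinsic characterization of the chosen block idempotent at each stage.
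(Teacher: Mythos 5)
First, a point of reference: the paper does not prove this theorem at all — it is quoted from \cite[Theorems~2.10 and 2.16]{AKO2011} — so there is no in-paper proof to compare against. Your skeleton (factor $\Br_P=\Br_{P/Q}\circ\Br_Q$ for $Q\trianglelefteq P$, use the vanishing of $\Br_P$ on proper orbit sums, identify $f$ as the unique $P$-stable block of $kC_G(Q)$ with $\Br_P(f)e=e$, then climb a subnormal chain) is indeed the standard Alperin--Brou\'e route, and your treatments of (b), of antisymmetry, and of $G$-equivariance are fine. But the linchpin of your part (a) is false.

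The ``first key observation'' --- that a primitive idempotent $i$ of $(kG)^P$ with $\Br_P(i)\neq 0$ stays primitive in $(kG)^Q$ --- already fails for $Q=\{1\}$. Take $G=S_3$, $p=3$, $P$ the Sylow $3$-subgroup: then $(kG)^P$ is a local algebra, so $i=1$ is primitive there and $\Br_P(1)=1_{kP}\neq 0$, yet $1$ is not primitive in $(kG)^{\{1\}}=kG$, which has two projective indecomposables. (More generally, any source idempotent of a block with more than one simple module is a counterexample.) The flaw in your justification is the parenthetical ``(by primitivity in $(kG)^P$)'': conjugation by $x\in P$ carries a primitive decomposition of $i$ in $(kG)^Q$ to \emph{another} primitive decomposition, conjugate to the first by a unit of $i(kG)^Qi$, but not to the same set of idempotents; so there is no $P/Q$-action on the chosen decomposition and no orbit count. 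Consequently $\Br_Q(i)$ need not be primitive in $kC_G(Q)$, and your identification of ``the'' block $e_\nu$ containing it collapses. The true statement --- the heart of \cite[Lemma IV.2.11]{AKO2011} --- is weaker and needs a different argument: the \emph{points} (conjugacy classes of primitive idempotents) of $(kG)^Q$ occurring in a decomposition of $i$ form a single $P$-orbit, and $i$ lies in $\Tr_R^P((kG)^R)$ up to the other orbits, where $R$ is the stabilizer of such a point; since $\Br_P\circ\Tr_R^P=0$ for $R<P$, the hypothesis $\Br_P(i)\neq 0$ forces $R=P$, so there is a single $P$-stable covered point, and all nonzero $\Br_Q(i_j)$ are then conjugate in $kC_G(Q)$ and lie in one block. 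Once that lemma is in place, your deductions that this block is $P$-stable and satisfies $e\Br_P(e_\nu)=e$ go through verbatim.

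A secondary gap, which you flag but underestimate: in the reduction along $Q=Q_0\trianglelefteq\cdots\trianglelefteq Q_r=P$, the normal-case lemma applies to idempotents primitive in $(kG)^{Q_m}$, whereas your $i$ is only primitive in $(kG)^P$. Relating $\Br_{Q_{m-1}}(i)$ to the Brauer images of the primitive constituents of $i$ in $(kG)^{Q_m}$ is exactly the content of the chain conditions (ii)/(iii) in Theorem~\ref{thm (Q,f)le(P,e) equivalences} and must be carried along inductively (this also underlies your transitivity argument in (c)). This part is fixable by standard bookkeeping, but the primitivity claim is not; it must be replaced by the covering-points argument above.
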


Clearly $(\{1\},b)\in\BP(kG,b)$ and Part~(b) of the above theorem implies that if $(P,e)\in\BP(kG,b)$ then $(\{1\},b)\le(P,e)$. Parts~(a) and (c) further imply that if $(Q,f)\le(P,e)$ holds for elements in $\BP(kG)$ then $(Q,f)\in\BP(kG,b)$ if and only if $(P,e)\in\BP(kG,b)$.

\smallskip
For Brauer pairs $(Q,f), (P,e)\in\BP(kG)$ one writes $(Q,f)\trianglelefteq (P,e)$ if $Q\trianglelefteq P$, $f$ is $P$-stable and $\Br_P(f)e=e$, cf.~\cite[Definition~IV.2.13]{AKO2011}. The following result is well-known to specialists. We state it for convenient future reference and give a proof for the convenience of the reader.

\begin{theorem}\label{thm (Q,f)le(P,e) equivalences}
For $(Q,f),(P,e)\in \BP(kG)$ with $Q\le P$ the following statements are equivalent:

\smallskip
{\rm(i)} One has $(Q,f)\le (P,e)$.

\smallskip
{\rm (ii)} There exist primitive idempotents $i$ of $(kG)^P$ and $j$ of $(kG)^Q$ such that $ij=j=ji$, $\Br_P(i)e\neq 0$ and $\Br_Q(j)f\neq 0$.

\smallskip
{\rm (iii)} There exist Brauer pairs $(Q_i,d_i)\in\BP(kG)$, $i=0,\ldots,n$, such that
\begin{equation*}
  (Q,f)=(Q_0,d_0)\trianglelefteq(Q_1,d_1)\trianglelefteq\cdots\trianglelefteq(Q_n,d_n)=(P,e)\,.
\end{equation*}

\smallskip
{\rm (iv)} For every primitive idempotent $i$ of $(kG)^P$ with $Br_P(i)e\neq 0$ one has $Br_Q(i)f\neq 0$.

\smallskip
{\rm (v)} There exists a primitive idempotent $i$ of $(kG)^P$ such that $\Br_P(i)e\neq 0$ and $\Br_Q(i)f=\Br_Q(i)\neq 0$.

\smallskip
{\rm (vi)} There exists a primitive idempotent $i$ of $(kG)^P$ such that $Br_P(i)e\neq 0$ and $\Br_Q(i)f\neq 0$.
\end{theorem}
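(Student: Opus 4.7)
The plan is to organize the six conditions around a cycle $(\mathrm{i})\Rightarrow(\mathrm{iv})\Rightarrow(\mathrm{vi})\Rightarrow(\mathrm{ii})\Rightarrow(\mathrm{i})$, insert~(v) between (i) and (vi), and treat~(iii) separately using Theorem~\ref{thm Brauer pairs}. Unpacking ``$i\in (kG)^P\subseteq(kG)^Q$ is associated to $(Q,f)$'' to mean $f\Br_Q(i)=\Br_Q(i)\neq 0$, the implication (i)$\Rightarrow$(iv) is immediate from the definition. For (i)$\Rightarrow$(v) I first exhibit a primitive idempotent $i$ of $(kG)^P$ with $\Br_P(i)e\neq 0$ by decomposing $1=\sum_k i_k$ into orthogonal primitive idempotents in $(kG)^P$ and using $e=\sum_k\Br_P(i_k)e\neq 0$; the defining property of~(i) applied to such an $i$ then yields $\Br_Q(i)=f\Br_Q(i)\neq 0$. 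The implications (iv)$\Rightarrow$(vi) and (v)$\Rightarrow$(vi) are trivial. For (vi)$\Rightarrow$(ii), I refine the given primitive $i$ into a sum $i=\sum_k j_k$ of pairwise orthogonal primitive idempotents of $(kG)^Q$; automatically $ij_k=j_ki=j_k$, and expanding $\Br_Q(i)f=\sum_k\Br_Q(j_k)f\neq 0$ produces some index $k$ with $\Br_Q(j_k)f\neq 0$, so $j:=j_k$ is the desired idempotent.

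The main step is (ii)$\Rightarrow$(i). By Theorem~\ref{thm Brauer pairs}(a) there is a unique block idempotent $d_0$ of $kC_G(Q)$ with $(Q,d_0)\leq (P,e)$, and the defining property of $\leq$ applied to the primitive $i$ from~(ii) gives $\Br_Q(i)=d_0\Br_Q(i)$. I then complete $j$ to a primitive decomposition $i=j+\sum_k j_k$ in $(kG)^Q$, using that $i-j$ is an idempotent orthogonal to $j$ which can be further refined. The nonzero elements among $\Br_Q(j)$ and the $\Br_Q(j_k)$ are pairwise orthogonal primitive idempotents of $kC_G(Q)$, hence linearly independent. Comparing coefficients in $d_0\Br_Q(i)=\Br_Q(i)$ yields $d_0\Br_Q(j)=\Br_Q(j)$, while primitivity of $\Br_Q(j)$ together with $\Br_Q(j)f\neq 0$ also forces $f\Br_Q(j)=\Br_Q(j)$. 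Since distinct block idempotents of $kC_G(Q)$ are orthogonal and $\Br_Q(j)\neq 0$, this forces $d_0=f$, and (i) follows.

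The equivalence (i)$\Leftrightarrow$(iii) is handled with Theorem~\ref{thm Brauer pairs}. For (iii)$\Rightarrow$(i), each $\trianglelefteq$ in the chain implies $\leq$ by combining part~(b) with the uniqueness in part~(a), and transitivity from part~(c) concludes. For (i)$\Rightarrow$(iii), I pick a chain of normal inclusions $Q=Q_0\trianglelefteq Q_1\trianglelefteq\cdots\trianglelefteq Q_n=P$ inside the $p$-group $P$ and use part~(a) to assign to each $Q_r$ the unique block idempotent $d_r$ with $(Q_r,d_r)\leq (P,e)$; by uniqueness $d_n=e$ and $d_0=f$. Applying~(a) to $(Q_{r+1},d_{r+1})$ and combining with transitivity from part~(c) shows $(Q_r,d_r)\leq (Q_{r+1},d_{r+1})$, and finally each such $\leq$ upgrades to $\trianglelefteq$ by part~(b) because $Q_r\trianglelefteq Q_{r+1}$.

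The main obstacle is the step (ii)$\Rightarrow$(i): this is the only place where primitivity of Brauer images in $kC_G(Q)$ must be combined with the uniqueness in Theorem~\ref{thm Brauer pairs}(a) to identify $f$ with the canonical block idempotent $d_0$ produced by part~(a). The remaining implications amount to bookkeeping with orthogonal idempotent decompositions in $(kG)^P$ and $(kG)^Q$.
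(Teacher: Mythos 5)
Your proof is correct, but it takes a genuinely different and more self-contained route than the paper. The paper outsources the equivalences (i)$\iff$(ii)$\iff$(iii) to \cite[Proposition IV.2.14]{AKO2011} and only argues (i)$\Rightarrow$(v), (iv)$\Rightarrow$(i) and (vi)$\Rightarrow$(i) itself; you instead prove (ii)$\Rightarrow$(i), (vi)$\Rightarrow$(ii) and (i)$\iff$(iii) from scratch, so that the whole theorem rests only on Theorem~\ref{thm Brauer pairs}. Your argument for (i)$\Rightarrow$(iii) (a subnormal series in the $p$-group, the uniqueness in part~(a) plus transitivity to identify the intermediate blocks, and part~(b) to upgrade $\le$ to $\trianglelefteq$) is essentially the standard proof of the cited proposition, so what your approach buys is independence from the reference at the cost of length. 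Two smaller remarks. First, your (ii)$\Rightarrow$(i) is more elaborate than necessary: once $d_0\Br_Q(i)=\Br_Q(i)$ is known, the relation $ij=ji=j$ gives $d_0\Br_Q(j)=d_0\Br_Q(i)\Br_Q(j)=\Br_Q(i)\Br_Q(j)=\Br_Q(j)$ directly, with no need for a full primitive decomposition of $i$ in $(kG)^Q$ and the linear-independence bookkeeping; note also that your ``comparing coefficients'' step silently uses that a central idempotent times a primitive idempotent is $0$ or that idempotent (the same local-ring fact you invoke for $f$), so be sure to say so. The paper's proofs of (iv)$\Rightarrow$(i) and (vi)$\Rightarrow$(i) are the analogous shortcut applied to $i$ itself: $\Br_Q(i)f'=\Br_Q(i)$ for the canonical $f'$ from part~(a), hence $0\neq\Br_Q(i)f=\Br_Q(i)f'f$ forces $f=f'$. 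Second, (iv)$\Rightarrow$(vi) is not purely formal, since it requires the existence of some primitive idempotent $i$ of $(kG)^P$ with $\Br_P(i)e\neq 0$; you establish exactly this in your (i)$\Rightarrow$(v) step by decomposing $1$ in $(kG)^P$, so nothing is actually missing, but the word ``trivial'' undersells it.
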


\begin{proof}
The equivalences (i)$\iff$(ii)$\iff$(iii) follow from \cite[Proposition IV.2.14]{AKO2011}. Moreover, the implications (i)$\Rightarrow$(iv) and (v)$\Rightarrow$(vi) are trivial and the implication (i)$\Rightarrow$(v) follows from the fact that the image of a primitive idempotent under a surjective $k$-algebra homomorphism is either $0$ or a primitive idempotent.

Next we show that (iv) implies (i). Let $i$ be a primitive idempotent of $(kG)^P$ such that $\Br_P(i)e=\Br_P(i)\neq 0$. By (iv), $\Br_Q(i)f\neq 0$. By Theorem~\ref{thm Brauer pairs}(a) there exists a block idempotent $f'$ of $kC_G(Q)$ such that $(Q,f')\le (P,e)$. Thus, $\Br_Q(i)f'=Br_Q(i)$ which implies that $0\neq \Br_Q(i)f=\Br_Q(i)f'f$ and further that $f=f'$ and thus $(Q,f)\le (P,e)$.


Finally, we show that (vi) implies (i). Let $i$ be as in (vi). By Theorem~\ref{thm Brauer pairs}(a) there exists a block idempotent $f'$ of $kC_G(Q)$ such that $(Q,f')\le (P,e)$. This implies $\Br_Q(i)f'=\Br_Q(i)\neq 0$ and $0\neq\Br_Q(i)f=\Br_Q(i)f'f$. Thus $f=f'$ and $(Q,f)\le(P,e)$.
\end{proof}

\medskip
Recall that if $I\le H\le G$ then we have a well-defined trace map
\begin{equation*}
  \Tr_I^H\colon (kG)^I\to (kG)^H\,, \quad a\mapsto\sum_{x\in[H/I]} \lexp{x}{a}\,.
\end{equation*}
A subgroup $P$ of $G$, minimal with the property that $b\in\Tr_P^G((kG)^P)$, is called a {\em defect group} of the block idempotent $b$ and of the block algebra $kGb$. The defect groups of $kGb$ form a  single $G$-conjugacy class of $p$-subgroups of $G$. Maximal elements in $\BP(kG,b)$ enjoy properties that resemble the Sylow Theorem for finite groups.

\begin{theorem}(\cite[Theorem~2.20]{AKO2011})\label{thm maximal Brauer pairs}
{\rm (a)} The maximal elements in $\BP(kG,b)$ with respect to $\le$ form a single $G$-orbit.

\smallskip
{\rm (b)} For $(P,e)\in \BP(kG,b)$ the following are equivalent.

\smallskip
\quad {\rm (i)} $(P,e)$ is a maximal element in $\BP(kG,b)$.

\smallskip
\quad {\rm (ii)} $P$ is a defect group of $kGb$.

\smallskip
\quad {\rm (iii)} $P$ is maximal among all $p$-subgroups of $G$ with the property $\Br_P(b)\neq 0$.
\end{theorem}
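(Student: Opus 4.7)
My approach is to establish (ii)$\Leftrightarrow$(iii) first via a relative-trace description of the Brauer kernel, then derive (iii)$\Leftrightarrow$(i) using Theorem~\ref{thm Brauer pairs}(a), and finally attack the transitivity claim~(a) via a Sylow-type argument. Parts (b) and (a) are linked because the proof of (i)$\Rightarrow$(iii) feeds on part~(a).

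For (ii)$\Leftrightarrow$(iii) the key input is the classical formula
\begin{equation*}
\ker\bigl(\Br_Q\colon (kG)^Q\to kC_G(Q)\bigr)=\sum_{R<Q}\Tr_R^Q\bigl((kG)^R\bigr),
\end{equation*}
which I would verify by using the $Q$-orbit basis of $(kG)^Q$ together with the transitivity relation $\Tr_S^Q=\Tr_R^Q\circ\Tr_S^R$. Applied to $b\in(kG)^G$ together with $\Tr_P^G\circ\Tr_R^P=\Tr_R^G$, this shows that the minimal $p$-subgroup $P$ with $b\in\Tr_P^G((kG)^P)$ and the maximal $p$-subgroup $P$ with $\Br_P(b)\neq 0$ agree up to $G$-conjugacy, yielding (ii)$\Leftrightarrow$(iii).

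For (iii)$\Rightarrow$(i): if $(P,e)\in\BP(kG,b)$ and $P$ is maximal with $\Br_P(b)\neq 0$, then any strict extension $(Q,d)>(P,e)$ inside $\BP(kG,b)$ must have $Q>P$ (the case $Q=P$ forces $e=d$ by the uniqueness in Theorem~\ref{thm Brauer pairs}(a) applied to the two inequalities $(P,e)\le(P,d)$ and $(P,d)\le(P,d)$), and then $\Br_Q(b)d=d\neq 0$ contradicts the maximality of $P$. For the converse (i)$\Rightarrow$(iii), I would use part~(a): pick a defect group $D$; by (ii)$\Leftrightarrow$(iii) some $(D,f)\in\BP(kG,b)$ exists, and by the implication (iii)$\Rightarrow$(i) just proved, this pair is maximal; part~(a) then conjugates it to $(P,e)$, so $P$ is $G$-conjugate to $D$ and hence a defect group.

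The main obstacle is part~(a). Given two maximal pairs $(P,e),(P',e')\in\BP(kG,b)$, I would first invoke the $G$-conjugacy of defect groups — itself a consequence of (ii)$\Leftrightarrow$(iii) and a Rosenberg-lemma argument inside the primitive $G$-algebra $(kG)b$ — to reduce to $P=P'$. The remaining task is to produce $g\in N_G(P)$ with $\lexp{g}{e}=e'$. For this I would pick primitive idempotents $i,i'\in (kGb)^P$ associated to $(P,e)$ and $(P,e')$ via Theorem~\ref{thm (Q,f)le(P,e) equivalences}(v), exploit the fact that both $\Tr_P^G(i)$ and $\Tr_P^G(i')$ lie in the line $k\cdot b\subseteq (kG)^G$ (because $b$ is primitive in $Z(kG)$ and both idempotents satisfy $ib=i$, $i'b=i'$), and then extract the required conjugating element by a Rosenberg-style transitivity argument inside the local algebra $\Br_P(b)\cdot kC_G(P)\cdot\Br_P(b)$. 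This pointed-group-style $N_G(P)$-transitivity on primitive idempotents with nonzero Brauer image is the technical heart of the proof and is where one really uses that $(P,e)$ and $(P,e')$ are both maximal rather than merely $(kG,b)$-Brauer pairs above $(\{1\},b)$.
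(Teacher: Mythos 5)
The paper offers no proof of this statement; it is quoted verbatim from \cite[Theorem~2.20]{AKO2011}, so your attempt can only be judged on its own merits. Your treatment of (ii)$\Leftrightarrow$(iii) via $\ker\Br_Q\cap(kG)^Q=\sum_{R<Q}\Tr_R^Q((kG)^R)$ and a Mackey-type computation is the standard argument and is fine in outline, as is (iii)$\Rightarrow$(i). The problem is a genuine circularity in the remaining logic. You prove (i)$\Rightarrow$(iii) by invoking part~(a), and you prove part~(a) by ``invoking the $G$-conjugacy of defect groups to reduce to $P=P'$'' --- but that reduction is only available once you know that the first components of \emph{maximal} Brauer pairs are defect groups, which is precisely the implication (i)$\Rightarrow$(ii) you are deferring to part~(a). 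What is actually hard in the Alperin--Brou\'e theorem, and what your proposal never addresses, is showing that a Brauer pair $(P,e)\in\BP(kG,b)$ with $P$ strictly smaller than a defect group always extends properly to some $(Q,d)>(P,e)$. This is not automatic from $\Br_Q(b)\neq 0$ for some $Q>P$: Theorem~\ref{thm Brauer pairs}(a) produces a unique pair \emph{below} a given $(Q,d)$, but gives no control over which block of $kC_G(P)$ it selects, so a priori $(P,e)$ could be maximal with $P$ too small. The standard proofs handle this by an inductive extension argument up the chain $P\trianglelefteq N_Q(P)\le Q$ using the normal-inclusion criterion of Theorem~\ref{thm Brauer pairs}(b), or equivalently by the pointed-group formalism (every local pointed group is contained in a defect pointed group). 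Some such ingredient must be supplied before your chain of implications closes.

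Two smaller points. First, your claim that $\Tr_P^G(i)$ lies in the line $k\cdot b$ is false in general: one only gets $\Tr_P^G(i)\in Z(kG)b=Z(kGb)$, which is a local ring but usually of dimension greater than one; the correct ingredient is that $\Tr_P^G((kG)^Pb)$ is an ideal of the local ring $Z(kGb)$ containing $b$, to which Rosenberg's lemma can then be applied. Second, the $N_G(P)$-transitivity on the blocks $e$ with $\Br_P(b)e=e$ (for $P$ a defect group) is correctly identified as the technical heart, but ``a Rosenberg-style argument inside $\Br_P(b)kC_G(P)\Br_P(b)$'' (which is just the non-local algebra $kC_G(P)\Br_P(b)$, since $\Br_P(b)$ is central) is too vague to certify: the usual route is the identity $\Br_P(b)=\Tr_P^{N_G(P)}(\Br_P(a))$ for $b=\Tr_P^G(a)$, combined with primitivity of $\Br_P(b)$ in $(Z(kC_G(P)))^{N_G(P)}$, which is Brauer-first-main-theorem territory and deserves an explicit argument.
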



\section{Fusion systems of block algebras}\label{sec fusion systems}

Throughout this section, $p$ is a prime. We first recall the basic notions and properties of fusion systems, a structure introduced by Puig. Our terminology follows \cite[Chapter I]{AKO2011}.

For subgroups $Q$ and $R$ of a finite group $G$ we denote by $\Hom_G(Q,R)$ the set of all group homomorphisms $\varphi\colon Q\to R$ with the property that there exists $x\in G$ with $\varphi(u)=c_x(u)$ for all $u\in Q$. Moreover, we set $\Aut_G(Q):=\Hom_G(Q,Q)$.

\begin{definition}(\cite[Definition~I.2.1]{AKO2011})\label{def fusion system}
Let $P$ be a finite $p$-group. A subcategory $\calF$ of the category of finite groups whose objects are the subgroups of $P$ is called a {\em fusion system} over $P$ if for any two subgroups $Q$ and $R$ of $P$, the set $\Hom_\calF(Q,R)$ has the following properties:

\smallskip
(i) $\Hom_P(Q,R)\subseteq\Hom_\calF(Q,R)$ and every element of $\Hom_{\calF}(Q,R)$ is injective.

\smallskip
(ii) For each $\varphi\in\Hom_{\calF}(Q,R)$, the group isomorphism $Q\to\varphi(Q)$, $u\mapsto \varphi(u)$, and its inverse are morphisms in $\calF$.
\end{definition}

For instance, if $G$ is a finite group and $P$ is a $p$-subgroup of $G$, we obtain a fusion system $\calF_P(G)$ over $P$ by setting $\Hom_{\calF_P(G)}(Q,R):=\Hom_G(Q,R)$, for all subgroups $Q$ and $R$ of $P$. Note that the intersection of two fusion systems over $P$ is again a fusion system and that a fusion system over $P$ is determined by the isomorphisms it contains. Thus the smallest fusion system over a finite $p$-group $P$ is the fusion system $\calF_P(P)$.

\begin{definition}(\cite[Definition~I.2.4]{AKO2011})\label{def fully centralized}
Let $\calF$ be a fusion system over a finite $p$-group $P$. A subgroup $Q$ of $P$ is called {\em fully $\calF$-centralized} if $|C_P(Q)|\ge|C_P(Q')|$ for any subgroup $Q'$ of $P$ which is $\calF$-isomorphic to $Q$. Similarly, $Q$ is called {\em fully $\calF$-normalized} if $|N_P(Q)|\ge|N_P(Q')|$ for any subgroup $Q'$ of $P$ which is $\calF$-isomorphic to $Q$.
\end{definition}

\begin{definition}(\cite[Definition~I.2.2]{AKO2011})\label{def Nphi}
Let $\calF$ be a fusion system on a finite $p$-group $P$ and let $\varphi\colon Q\to R$ be an isomorphism in $\calF$. One denotes by $N_\varphi$ the set of all elements $y\in N_P(Q)$ for which there exists $z\in N_P(R)$ with the property $\varphi\circ c_y = c_z\circ\varphi\colon Q\to R$. Note that $QC_P(Q)\le N_\varphi\le N_P(Q)$ and that $N_\varphi$ does not depend on $\calF$, but only on $\varphi$ and $P$.
\end{definition}

If $\calF$ is a fusion system over a finite $p$-group $P$ and $Q\le P$ then we set $\Aut_\calF(Q):=\Hom_{\calF}(Q,Q)$, a subgroup of the automorphism group of $Q$. The following definition of saturation goes back to Stancu and is an equivalent reformulation of the original definition, see \cite[Proposition~I.9.3]{AKO2011}.

\begin{definition}\label{def saturated}
A fusion system $\calF$ over a $p$-group $P$ is called {\em saturated} if the following two conditions hold.

\smallskip
(i) {\em Sylow axiom:} The group $\Aut_P(P)$ is a Sylow $p$-subgroup of $\Aut_\calF(P)$.

\smallskip
(ii) {\em Extension axiom:} For every $Q\le P$ and every $\varphi\in\Hom_{\calF}(Q,P)$ such that $\varphi(Q)$ is fully $\calF$-normalized there exists a morphism $\psi\in\Hom_\calF(N_\varphi,P)$ whose restriction to $Q$ equals $\varphi$.
\end{definition}

For instance, if $P$ is a Sylow $p$-subgroup of a finite group $G$ then the fusion system $\calF_P(G)$ is saturated (see \cite[Theorem~2.3]{AKO2011}).

\begin{definition}\label{def block fusion system}
Let $G$ be a finite group, let $k$ be a field of characteristic $p$, let $b$ be a block idempotent of $kG$, and let $(P,e)$ be a maximal $(kG,b)$-Brauer pair. We define a category $\calF_{(P,e)}(kGb)$ as follows. First, for every $Q\le P$ denote by $e_Q$ the unique block idempotent of $kC_G(Q)$ with $(Q,e_Q)\le (P,e)$. The objects of $\calF_{(P,e)}(kGb)$ are the subgroups of $P$ and for subgroups $Q$ and $R$ of $P$ let $\Hom_{\calF_{(P,e)}(kGb)}(Q,R)$ denote the set of group homomorphisms $\varphi\colon Q\to R$ such that there exists $x\in G$ with $\varphi(u)=c_x(u)$ for all $u\in Q$ and $\lexp{x}{(Q,e_Q)}\le(R,e_R)$. Composition in $\calF_{(P,e)}(kGb)$ is the usual composition of functions.
\end{definition}

\begin{remark}\label{rem block fusion system}
Let $kG$, $b$, and $(P,e)$ be as in Definition~\ref{def block fusion system}.

\smallskip
(a) It is clear from the definition that $\calF_{(P,e)}(kGb)$ is a fusion system over $P$. 

\smallskip
(b) If $kGb$ is the principal block of $kG$, then by Brauer's third main theorem, $\calF_{(P,e)}(kGb)$ is equal to $\calF_P(G)$ and $P$ is a Sylow $p$-subgroup of $G$. Thus, $\calF_{(P,e)}(kGb)$ is saturated in this case.

\smallskip
(c) Example~\ref{ex Sylow axiom fails} below shows that in general the Sylow axiom does not hold for $\calF_{(P,e)}(kGb)$. But we will show in Theorem~\ref{thm B} that the extension axiom holds for $\calF_{(P,e)}(kGb)$.
\end{remark}

The following theorem was first proved by Puig. It follows from Theorem~IV.3.2 and Proposition~IV.3.14 in \cite{AKO2011}. See also \cite[Theorem~8.5.2]{Linckelmann2018} and note that there the terminology is different: Fusion systems in \cite{Linckelmann2018} are defined to be saturated fusion systems in our terminology.

\begin{theorem}\label{thm splitting field implies saturated}
Let $kG$, $b$, and $(P,e)$ be as in Definition~\ref{def block fusion system} and suppose that the $k$-algebra $kC_G(P)e$ is split, i.e., for every simple $kC_G(P)e$-module $V$ one has a $k$-algebra isomorphism $\End_{kC_G(P)e}(V)\cong k$. Then the fusion system $\calF_{(P,e)}(kGb)$ is saturated.
\end{theorem}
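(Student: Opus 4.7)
The plan is to verify the Sylow and extension axioms of Definition~\ref{def saturated} for $\calF:=\calF_{(P,e)}(kGb)$ separately. Throughout, for $Q\le P$ let $e_Q$ denote the unique block idempotent of $kC_G(Q)$ with $(Q,e_Q)\le(P,e)$ (Theorem~\ref{thm Brauer pairs}(a)), and let $N_G(Q,e_Q)$ be the stabilizer of $e_Q$ in $N_G(Q)$ under conjugation. The starting identification, forced by Definition~\ref{def block fusion system} together with the uniqueness in Theorem~\ref{thm Brauer pairs}(a), is
\begin{equation*}
  \Aut_\calF(Q)\cong N_G(Q,e_Q)/C_G(Q)
\end{equation*}
for every $Q\le P$: any automorphism $c_x|_Q$ in $\calF$ forces $x\in N_G(Q)$ with $\lexp{x}{e_Q}=e_Q$.

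For the Sylow axiom, applied with $Q=P$, one has $\Aut_P(P)=PC_G(P)/C_G(P)$ and $\Aut_\calF(P)=N_G(P,e)/C_G(P)$, so the axiom reduces to proving $p\nmid[N_G(P,e):PC_G(P)]$. The structural input is that, by maximality of $(P,e)$ (Theorem~\ref{thm maximal Brauer pairs}), $Z(P)=P\cap C_G(P)$ is the defect group of $e$ as a block of $kC_G(P)$, and $Z(P)$ is central in $C_G(P)$. Under splitness, the natural surjection $E:=kC_G(P)e\twoheadrightarrow k[C_G(P)/Z(P)]\bar e$ has simple codomain isomorphic to $\Mat_n(k)$ (a defect-zero block over a splitting field). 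The conjugation action of $N_G(P,e)/C_G(P)$ on $E$ descends to an action on $\Mat_n(k)$ fixing the identity; by Skolem--Noether this action is inner and realized by units that, via source-algebra analysis, come from $C_G(P)$ itself. Using that $P$ centralizes $C_G(P)$ and therefore acts trivially on $E$, one identifies $N_G(P,e)/PC_G(P)$ with a $p'$-group, namely the inertial quotient of the Brauer correspondent.

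For the extension axiom, let $\varphi\in\Hom_\calF(Q,P)$ with $\varphi(Q)$ fully $\calF$-normalized and write $\varphi=c_x|_Q$ for some $x\in G$ with $\lexp{x}{(Q,e_Q)}\le(P,e)$. A standard consequence of the Sylow axiom just established is that $N_P(\varphi(Q))$ is a Sylow $p$-subgroup of $N_G(\varphi(Q),e_{\varphi(Q)})$. Since $\lexp{x}{N_\varphi}$ is a $p$-subgroup of $N_G(\varphi(Q),e_{\varphi(Q)})$ (it normalizes $\varphi(Q)$ by the defining property of $N_\varphi$ and fixes $e_{\varphi(Q)}=\lexp{x}{e_Q}$ by conjugation), Sylow's theorem supplies $z\in N_G(\varphi(Q),e_{\varphi(Q)})$ with $\lexp{zx}{N_\varphi}\le N_P(\varphi(Q))\le P$. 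Then $\psi:=c_{zx}|_{N_\varphi}$ extends $\varphi$ and is a morphism in $\calF$ because $\lexp{zx}{(N_\varphi,e_{N_\varphi})}\le(P,e)$ by transitivity of $\le$ (Theorem~\ref{thm (Q,f)le(P,e) equivalences}).

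The main obstacle is the Sylow-axiom step: translating splitness of $kC_G(P)e$ into the $p'$-order of the inertial quotient $N_G(P,e)/PC_G(P)$. The conceptual route via Skolem--Noether on the defect-zero quotient $\Mat_n(k)$ is clean, but making the identification of units with elements of $C_G(P)$ precise requires working with the source algebra $iEi$ for a primitive idempotent $i$ of $(kG)^P$ associated to $(P,e)$; this is the technical heart of \cite[Theorems~IV.3.2 and IV.3.14]{AKO2011}. The extension axiom, once the Sylow axiom is in hand, is a routine Sylow-theoretic adjustment inside $N_G(\varphi(Q),e_{\varphi(Q)})$ with no further input from splitness.
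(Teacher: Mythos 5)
The paper does not supply a proof of this theorem: it is recorded as a result of Puig and the reader is referred to \cite[Theorem~IV.3.2 and Proposition~IV.3.14]{AKO2011} and \cite[Theorem~8.5.2]{Linckelmann2018}. So there is no in-paper argument against which to align your proposal; what I can do is assess whether the proposal actually proves the statement.

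Your reduction of the Sylow axiom to $p\nmid[N_G(P,e):PC_G(P)]$ via $\Aut_\calF(P)\cong N_G(P,e)/C_G(P)$ is correct, and passing to the defect-zero quotient $k[C_G(P)/Z(P)]\bar e\cong\Mat_n(k)$ is the right move. But your own closing paragraph concedes that the decisive step --- showing the inner automorphisms of $\Mat_n(k)$ induced by $N_G(P,e)$ are realized by units coming from $C_G(P)$, so that the kernel $PC_G(P)$ has $p'$-index --- is precisely the technical content of the cited references. As written, this half is a plan, not a proof.

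The extension-axiom half contains a genuine gap, and unlike the Sylow half it is not flagged as such. You assert that \emph{a standard consequence of the Sylow axiom just established is that $N_P(\varphi(Q))$ is a Sylow $p$-subgroup of $N_G(\varphi(Q),e_{\varphi(Q)})$} when $\varphi(Q)$ is fully $\calF$-normalized. This does not follow from the Sylow axiom at $P$: that axiom constrains $\Aut_\calF(P)$ only, and says nothing about the $p$-local structure of $N_G(R,e_R)$ for a proper subgroup $R<P$. The statement you need --- that for $R$ fully $\calF$-normalized, $N_P(R)$ is a Sylow $p$-subgroup of $N_G(R,e_R)$ --- is a separate key lemma in the cited proofs. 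Its proof uses $\Br_{N_P(R)}(e_R)\neq 0$, the maximality of the Brauer pair $(P,e)$, and defect-group arguments in $N_G(R,e_R)$ (together with a splitness input at the level of $R$, not just at $P$); it is in no sense a corollary of the Sylow axiom at $P$. Without this lemma the Sylow-theoretic adjustment you perform inside $N_G(\varphi(Q),e_{\varphi(Q)})$ is unavailable, and the extension axiom is not established. In short, both halves ultimately rest on exactly the technical ingredients of \cite{AKO2011} and \cite{Linckelmann2018}, but only one half acknowledges that dependence; the extension-axiom step as stated is incorrect rather than merely incomplete.
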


We are grateful to Radha Kessar who suggested the following example to us.

\begin{example}\label{ex Sylow axiom fails}
Let $p=2$, $k=\FF_2$, the field with $2$ elements, and $G:=D_{24}=(C_3\times C_4)\rtimes C_2$, the dihedral group with $24$ elements, with $C_2$ acting by inversion on $C_3\times C_4$. Let $g$ denote a generator of $C_3$. Then $b:=g+g^2$ is a block idempotent of $\FF_2 G$ and $(P,e):=(C_4,b)$ is a maximal $(\FF_2 G,b)$-Brauer pair. We have $\Aut_P(P)=\{1\}$, since $P$ is abelian and an easy computation shows that $\Aut_{\calF_{(P,e)}(\FF_2 Gb)}(P)\cong C_2$. Thus, the Sylow axiom does not hold for $\calF_{(P,e)}(\FF_2 Gb)$ and therefore the fusion system $\calF_{(P,e)}(\FF_2 Gb)$ is not saturated.
\end{example}


\section{Extension of scalars}\label{sec extension of scalars}

Throughout this section $L/K$ denotes a finite Galois extension of fields of characteristic $p>0$ and $\Gamma$ denotes its Galois group. Moreover, $G$ denotes a finite group.

$\Gamma$ acts via $K$-algebra automorphisms on the group algebra $LG$ and also on $Z(LG)$ by applying $\gamma\in\Gamma$ to the the coefficients of an element in $LG$. Thus, $\Gamma$ permutes the block idempotents of $LG$ and fixes the block idempotents $b$ of $KG$. Since $\Br_P\colon (LG)^P\to LC_G(P)$ commutes with the $\Gamma$-action, Theorem~\ref{thm maximal Brauer pairs} implies that any $\Gamma$-conjugate of $b$ has the same defect groups as $b$. We denote by $\Gamma_b$ the stabilizer of $b$ in $\Gamma$ and set
\begin{equation*}
  \btilde:=\sum_{\gamma\in[\Gamma/\Gamma_b]} \lexp{\gamma}{b}\,.
\end{equation*}
Clearly, $\btilde$ is an idempotent in $(Z(LG))^\Gamma=Z(KG)$. More precisely one has the following:

\begin{proposition}\label{prop Gamma and blocks}
{\rm (a)} Let $b$ be a block idempotent of $LG$. Then $\btilde:=\sum_{\gamma\in[\Gamma/\Gamma_b]} \lexp{\gamma}{b}$ is a block idempotent of $KG$.

\smallskip
{\rm (b)} The map $b\mapsto \btilde$ induces a bijection between the set of $\Gamma$-orbits of block idempotents of $LG$ and the set of block idempotents of $KG$.

\smallskip
{\rm (c)} If $b$ is a block idempotent of $LG$ and $\btilde$ is the block idempotent of $KG$ associated to it as in (a) then $b$ and $\btilde$ have the same defect groups.
\end{proposition}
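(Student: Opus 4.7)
The overall strategy is Galois descent: since $L/K$ is Galois with group $\Gamma$, we have $Z(KG) = Z(LG)^\Gamma$, and the $\Gamma$-action on $Z(LG)$ permutes its primitive idempotents, i.e., the block idempotents of $LG$. The Brauer homomorphism $\Br_P$ has already been noted to commute with the coefficient-wise $\Gamma$-action for every $p$-subgroup $P \le G$, and this will be the decisive tool for part (c).

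For part (a), I would first note that the elements $\lexp{\gamma}{b}$, for $\gamma$ ranging over $[\Gamma/\Gamma_b]$, are pairwise distinct block idempotents of $LG$, hence pairwise orthogonal primitive idempotents in the commutative ring $Z(LG)$. Therefore $\btilde$ is an idempotent, and it visibly lies in $Z(LG)^\Gamma = Z(KG)$. Primitivity is the main step: given a decomposition $\btilde = c_1 + c_2$ into orthogonal idempotents $c_1,c_2 \in Z(KG)$, I would view the $c_i$ inside $Z(LG)$, where each decomposes uniquely as a sum of block idempotents of $LG$. The condition $c_i \btilde = c_i$ forces these summands to lie in the orbit $\{\lexp{\gamma}{b} : \gamma \in [\Gamma/\Gamma_b]\}$. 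Since $c_i$ is $\Gamma$-invariant while $\Gamma$ acts transitively on this orbit, the set of summands of $c_i$ is either empty or the whole orbit, giving $c_i \in \{0, \btilde\}$.

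For part (b), the map is well-defined on $\Gamma$-orbits since $\btilde$ depends only on the orbit of $b$. Injectivity follows by comparing, for $\btilde_1 = \btilde_2$ arising from blocks $b_1,b_2$ of $LG$, the primitive idempotent decompositions in $Z(LG)$. For surjectivity, given a block idempotent $c$ of $KG$, I would decompose $c = f_1 + \cdots + f_n$ in $Z(LG)$ into block idempotents of $LG$; the $\Gamma$-invariance of $c$ forces $\Gamma$ to permute $\{f_1,\ldots,f_n\}$, and the same primitivity argument as in (a), now applied to $c$ in place of $\btilde$, shows this set must be a single $\Gamma$-orbit, hence $c = \tilde{f}_1$.

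For part (c), I would invoke Theorem~\ref{thm maximal Brauer pairs}(b)(iii): the defect groups of a block idempotent are exactly the maximal $p$-subgroups $P$ with nonzero Brauer image. Applying $\Br_P$ to $\btilde$ and using $\Gamma$-equivariance gives $\Br_P(\btilde) = \sum_{\gamma \in [\Gamma/\Gamma_b]} \lexp{\gamma}{\Br_P(b)}$. Since $\Br_P$ is a ring homomorphism and the $\lexp{\gamma}{b}$ are pairwise orthogonal, the summands are pairwise orthogonal idempotents in $LC_G(P)$, so $\Br_P(\btilde)$ vanishes if and only if $\Br_P(b)$ does. Hence $b$ and $\btilde$ share the same set of defect groups. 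The main obstacle throughout is the primitivity argument in (a), which drives both (a) and (b); one must be careful that a decomposition of $\btilde$ over $K$ refines in $LG$ to a partition of the orbit $\{\lexp{\gamma}{b}\}$, after which $\Gamma$-transitivity does the work.
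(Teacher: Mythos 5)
Your parts (a) and (b) follow the paper's argument essentially verbatim: the primitivity of $\btilde$ is proved in both cases by refining a hypothetical decomposition $\btilde=c_1+c_2$ in $Z(KG)$ to the (unique, hence $\Gamma$-stable) primitive decompositions in $Z(LG)$ and contradicting the transitivity of $\Gamma$ on the orbit of $b$; the paper dismisses (b) as "immediate from (a)", and your spelled-out surjectivity and injectivity arguments are exactly what is meant. Part (c) is where you genuinely diverge, and your route is correct and in fact a little cleaner. The paper fixes a defect group $P$ of $\btilde$, deduces from $\Br_P(\btilde)\neq 0$ that some $\Gamma$-conjugate of $b$ (hence $b$ itself, since $\Gamma$-conjugate blocks share defect groups) has a defect group $Q\supseteq P$, and then runs a second computation $\Br_Q(b)=\Br_Q(b)\Br_Q(\btilde)\neq 0$ to force $|Q|\le|P|$ and conclude $P=Q$. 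You instead establish the single clean equivalence $\Br_P(\btilde)\neq 0\iff\Br_P(b)\neq 0$ for \emph{every} $p$-subgroup $P$, using the orthogonality of the conjugates $\lexp{\gamma}{\Br_P(b)}$ to rule out cancellation (equivalently, one could multiply by $\Br_P(b)$), and then invoke Theorem~\ref{thm maximal Brauer pairs}(b)(iii) once: the two blocks have the same set of $p$-subgroups with nonvanishing Brauer image, hence the same maximal such subgroups, hence the same defect groups. This buys a symmetric, one-pass argument at no extra cost; the paper's version avoids the explicit orthogonality observation but pays for it with the two-step containment comparison.
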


\begin{proof}
(a) By definition, $\btilde$ is the sum of the distinct $\Gamma$-conjugates of $b$, thus an idempotent of $Z(KG)$. To see that $\btilde$ is primitive in $Z(KG)$, assume that $\btilde=c_1+c_2$ for non-zero orthogonal idempotents $c_1,c_2\in Z(KG)$ and let $I_1$ and $I_2$ denote the set of  primitive idempotents of $Z(LG)$ that occur in a primitive decomposition of $c_1$ and $c_2$ in $Z(LG)$, respectively. Then $I_1$ and $I_2$ are disjoint and $\Gamma$-stable. On the other hand $I_1\cup I_2$ is the single $\Gamma$-orbit of $b$. This is a contradiction.

\smallskip
(b) This is immediate from (a).

\smallskip
(c) Let $P$ be a defect group of $\btilde$. By Theorem~\ref{thm maximal Brauer pairs}, one has $\Br_P(\btilde)\neq 0$ in $KC_G(P)\subseteq LC_G(P)$. Thus $0\neq \Br_P(\btilde)=\sum_{\gamma\in[\Gamma/\Gamma_b]} \Br_P(\lexp{\gamma}{b})$ implies that some $\Gamma$-conjugate of $b$, and therefore also $b$, has a defect group $Q$ containing $P$. Thus, $0\neq\Br_Q(b)=\Br_Q(b\btilde)=\Br_Q(b)\Br_Q(\btilde)$, which implies that $\Br_Q(\btilde)\neq 0$ and therefore $|Q|\le|P|$. This implies $P=Q$.
\end{proof}

Note that $\Gamma$ acts on $\BP(LG)$ via 
\begin{equation}\label{eqn Gamma on BP}
  \lexp{\gamma}{(P,e)}=(P,\lexp{\gamma}{e})\,, 
\end{equation}
for $\gamma\in \Gamma$ and $(P,e)\in\BP(LG)$. Note that this action commutes with the $G$-action on $\BP(LG)$ so that we obtain an action of $\Gamma\times G$ on $\BP(LG)$. Moreover, since $\Br_P$ commutes with the action of $\Gamma$ and since the $G$-action on $LG$ commutes with the $\Gamma$-action on $LG$, $\Gamma\times G$ acts via poset isomorphisms on $\BP(LG)$. Thus, if $b$ is a block idempotent of $LG$ and $\gamma\in \Gamma$, the $G$-posets $\BP(LGb)$ and $\BP(LG\lexp{\gamma}{b})$ are isomorphic via (\ref{eqn Gamma on BP}) and $\Gamma_b\times G$ acts via poset automorphisms on $\BP(LGb)$.

\smallskip
In the next proposition we write $\le_K$ and $\le _L$ for the poset structures of $\BP(KG)$ and $\BP(LG)$, respectively. They are related as follows. 

\begin{proposition}\label{prop Gamma and le}
For $(Q,f),(P,e)\in\BP(LG)$ with $Q\le P$, the following are equivalent:

\smallskip
{\rm (i)} One has $(Q,\ftilde)\le_K (P,\etilde)$ in $\BP(KG)$.

\smallskip
{\rm (ii)} There exists $\gamma\in\Gamma$ such that $(Q,f)\le_L\lexp{\gamma}(P,e)$ in $\BP(LG)$.
\end{proposition}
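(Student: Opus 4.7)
The core of the proposition is the special case (with $\gamma=1$) of the implication (ii)$\Rightarrow$(i), which I isolate as the key claim: \emph{if $(Q,f)\le_L (P,e)$ in $\BP(LG)$, then $(Q,\ftilde)\le_K (P,\etilde)$ in $\BP(KG)$}. Granted this claim, (ii)$\Rightarrow$(i) in general follows by applying it with $(P,\lexp{\gamma}{e})$ in place of $(P,e)$: since $\lexp{\gamma}{e}$ and $e$ lie in the same $\Gamma$-orbit, Proposition~\ref{prop Gamma and blocks}(b) identifies their common image in $\BP(KG)$ with $\etilde$. For the converse (i)$\Rightarrow$(ii), let $h$ denote the unique block idempotent of $LC_G(Q)$ with $(Q,h)\le_L(P,e)$ furnished by Theorem~\ref{thm Brauer pairs}(a). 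The claim applied to this relation gives $(Q,\htilde)\le_K(P,\etilde)$, and the uniqueness in Theorem~\ref{thm Brauer pairs}(a) applied over $K$ combined with hypothesis (i) forces $\ftilde=\htilde$. By Proposition~\ref{prop Gamma and blocks}(b), $f$ and $h$ lie in the same $\Gamma$-orbit, so $f=\lexp{\gamma}{h}$ for some $\gamma\in\Gamma$; transporting the relation $(Q,h)\le_L(P,e)$ by the poset automorphism $\lexp{\gamma}$ of $\BP(LG)$ then yields $(Q,f)\le_L(P,\lexp{\gamma}{e})$, which is (ii).

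To prove the key claim I verify condition (iv) of Theorem~\ref{thm (Q,f)le(P,e) equivalences} over $K$. Let $\tilde{i}$ be any primitive idempotent of $(KG)^P$ with $\Br_P(\tilde{i})\etilde\neq 0$; the task is to show $\Br_Q(\tilde{i})\ftilde\neq 0$. Decompose $\tilde{i}=\sum_a j_a$ in the larger $L$-algebra $(LG)^P$ as a sum of pairwise orthogonal primitive idempotents. Each nonzero $\Br_P(j_a)$ is a primitive idempotent of $LC_G(P)$, and the block decomposition $\etilde=\sum_{\delta\in[\Gamma/\Gamma_e]}\lexp{\delta}{e}$ in $LC_G(P)$, together with $\Br_P(\tilde{i})\etilde\neq 0$, forces the existence of indices $a_0$ and $\delta_0$ with $\Br_P(j_{a_0})\lexp{\delta_0}{e}=\Br_P(j_{a_0})\neq 0$. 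Applying the Galois automorphism $\lexp{\delta_0^{-1}}$ (which commutes with $\Br_P$) rewrites this as $\Br_P(\lexp{\delta_0^{-1}}{j_{a_0}})\cdot e\neq 0$. Since $\lexp{\delta_0^{-1}}{j_{a_0}}$ is still a primitive idempotent of $(LG)^P$, condition (iv) over $L$ applied to the given relation $(Q,f)\le_L(P,e)$ yields $\Br_Q(\lexp{\delta_0^{-1}}{j_{a_0}})f\neq 0$, and applying $\lexp{\delta_0}$ gives $\Br_Q(j_{a_0})\cdot\lexp{\delta_0}{f}\neq 0$.

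Now $\lexp{\delta_0}{f}$ is one of the distinct $\Gamma$-conjugates of $f$ appearing in the block decomposition $\ftilde=\sum_{\delta'\in[\Gamma/\Gamma_f]}\lexp{\delta'}{f}$ of $\ftilde$ in $LC_G(Q)$, and the various $\lexp{\delta'}{f}$ are block idempotents lying in pairwise distinct blocks of $LC_G(Q)$; hence $\Br_Q(j_{a_0})\ftilde\neq 0$, as the nonzero contribution in the $\lexp{\delta_0}{f}$-block cannot be cancelled by contributions from other blocks. Moreover, pairwise orthogonality of the $j_a$ in $(LG)^P\subseteq(LG)^Q$ together with centrality of $\ftilde$ in $LC_G(Q)$ imply that the elements $\Br_Q(j_a)\ftilde$ are pairwise orthogonal in $LC_G(Q)$, so $\Br_Q(\tilde{i})\ftilde=\sum_a\Br_Q(j_a)\ftilde$ is nonzero. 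This establishes condition (iv) over $K$, and hence the key claim.

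The main obstacle is the bookkeeping of Galois twists: the primitive decomposition of $\tilde{i}$ over $L$ can scatter its Brauer image on the $P$-side across several $\Gamma$-conjugates of $e$, and one must isolate a single summand $j_{a_0}$ whose $P$-side image lies in a specific conjugate $\lexp{\delta_0}{e}$ and then use $\Gamma$-equivariance of condition (iv) over $L$ to transfer the hypothesis to the matching conjugate $\lexp{\delta_0}{f}$ on the $Q$-side, all while keeping track of enough orthogonality on both $(LG)^P$ and $LC_G(Q)$ to rule out accidental cancellations.
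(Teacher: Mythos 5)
Your proof is correct, and it is essentially the mirror image of the paper's argument. The paper proves (i)$\Rightarrow$(ii) \emph{directly}: it takes a single primitive idempotent $i$ of $(KG)^P$ associated to $(P,\etilde)$, decomposes it in $(LG)^P$, locates a summand $j$ with $\Br_P(j)\lexp{\gamma}{e}=\Br_P(j)\neq 0$, and then computes $\Br_Q(j)f'\ftilde\neq 0$ to conclude that the block $f'$ of $LC_G(Q)$ lying under $(P,\lexp{\gamma}{e})$ is a $\Gamma$-conjugate of $f$; it then deduces (ii)$\Rightarrow$(i) by exactly the uniqueness/orbit-bijection bootstrap you use for your converse direction. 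You instead prove (ii)$\Rightarrow$(i) directly, by verifying criterion (iv) of Theorem~\ref{thm (Q,f)le(P,e) equivalences} over $K$, and recover (i)$\Rightarrow$(ii) from uniqueness in Theorem~\ref{thm Brauer pairs}(a) together with Proposition~\ref{prop Gamma and blocks}(b) applied to $C_G(Q)$. The technical kernel (primitive decomposition of $\tilde{i}$ over $L$, $\Gamma$-equivariance of the Brauer homomorphism, multiplication against the block decompositions of $\etilde$ and $\ftilde$) is the same in both proofs. The price of your direction is that criterion (iv) quantifies over \emph{all} primitive idempotents $\tilde{i}$ of $(KG)^P$ associated to $(P,\etilde)$, so you need the closing orthogonality argument to pass from $\Br_Q(j_{a_0})\ftilde\neq 0$ to $\Br_Q(\tilde{i})\ftilde\neq 0$; the paper's direction only needs one witness idempotent, for which criterion (vi) suffices. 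That orthogonality step is correct as written: $\Br_Q$ is an algebra homomorphism on $(LG)^Q\supseteq (LG)^P$, so the $\Br_Q(j_a)$ are pairwise orthogonal idempotents, and since $\ftilde$ is central in $LC_G(Q)$, the elements $\Br_Q(j_a)\ftilde$ are again pairwise orthogonal idempotents, whose sum is nonzero as soon as one of them is. What your route buys is that the containment $\calF_{(P,e)}(LGb)\subseteq\calF_{(P,\etilde)}(KG\btilde)$ (the $\gamma=1$ key claim, which is what Corollary~\ref{cor F and Ftilde} really needs) is established first and self-contained; what the paper's route buys is a slightly leaner direct argument.
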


\begin{proof}
Assume first that (i) holds and let $i$ be a primitive idempotent of $(KG)^P$ such that $\Br_P(i)\etilde=\Br_P(i)\neq 0$. Then, by definition also $\Br_Q(i)\ftilde=\Br_Q(i)\neq 0$. Let $J$ be a primitive decomposition of $i$ in $(LG)^P$. Since $\Br_P(i)\etilde\neq 0$, there exists $j\in J$ such that $\Br_P(j)\etilde\neq 0$. Thus, there exists $\gamma\in\Gamma$ such that $\Br_P(j)\lexp{\gamma}{e}\neq 0$. Since $\Br_P(j)$ is primitive in $LC_G(P)$, we have $\Br_P(j)\lexp{\gamma}{e}=\Br_P(j)$. Let $f'$ be the block idempotent of $LC_G(Q)$ such that $(Q,f')\le_L(P,\lexp{\gamma}{e})=\lexp{\gamma}(P,e)$. Then, by Theorem~\ref{thm (Q,f)le(P,e) equivalences} also $\Br_Q(j)f'=\Br_Q(j)\neq 0$. Thus $\Br_Q(j)f'\ftilde=\Br_Q(j)\Br_Q(i)f'\ftilde = \Br_Q(j)f' \Br_Q(i)\ftilde=\Br_Q(j)\Br_Q(i)=\Br_Q(j)\neq 0$ which implies that $f'\ftilde\neq 0$. This implies $f'=\lexp{\delta}{f}$ for some $\delta\in\Gamma$. Thus $\lexp{\delta}(Q,f)\le_L \lexp{\gamma}{(P,e)}$ and (ii) holds after applying $\delta^{-1}$.

\smallskip
Next assume that $\gamma\in\Gamma$ with $(Q,f)\le_L\lexp{\gamma}{(P,e)}$. By Theorem~\ref{thm Brauer pairs}(a) there exists a block idempotent $f_1$ of $LC_G(Q)$ such that $(Q,\ftilde_1)\le_K (P,\etilde)$. Since we already proved that (i) implies (ii), there exists $\delta\in \Gamma$ such that $(Q,f_1)\le_L\lexp{\delta}(P,e)$. Thus we have $(Q,\lexp{\gamma^{-1}}{f})\le_L (P,e)$ and also $(Q,\lexp{\delta^{-1}}{f})\le_L (P,e)$. The uniqueness part of Theorem~\ref{thm (Q,f)le(P,e) equivalences}(a) now implies that $f$ and $f_1$ are $\Gamma$-conjugate. Thus $\ftilde=\ftilde_1$ and $(Q,\ftilde)\le_K(P,\etilde)$.
\end{proof}

The following corollaries are now immediate from Proposition~\ref{prop Gamma and le}.

\begin{corollary}\label{cor BP(LG) and BP(KG)}
The map 
\begin{equation*}
  \BP(LG)\to \BP(KG)\,,\quad (P,e)\mapsto(P,\etilde)\,,
\end{equation*}
is a surjective morphism of $G$-posets, which restricts to a surjective morphism of $G$-posets $\BP(LGb)\to\BP(KG\btilde)$ for every block idempotent $b$ of $LG$.
\end{corollary}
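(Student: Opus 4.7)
The plan is to deduce the corollary as a direct series of applications of Propositions~\ref{prop Gamma and blocks} and~\ref{prop Gamma and le}, both of which were essentially set up for this purpose.

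First I would verify that the map is well-defined and $G$-equivariant. If $(P,e) \in \BP(LG)$, then $e$ is a block idempotent of $LC_G(P)$, so applying Proposition~\ref{prop Gamma and blocks}(a) to the finite group $C_G(P)$ in place of $G$ shows that $\etilde$ is a block idempotent of $KC_G(P)$; hence $(P,\etilde) \in \BP(KG)$. $G$-equivariance follows because conjugation by $x \in G$ permutes group elements while the $\Gamma$-action on $LG$ only affects coefficients, so the two actions commute and $\lexp{x}{\etilde} = \widetilde{\lexp{x}{e}}$ (the stabilizer of $\lexp{x}{e}$ in $\Gamma$ coincides with $\Gamma_e$).

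For the order-preserving property, suppose $(Q,f) \le_L (P,e)$ in $\BP(LG)$. Taking $\gamma = 1$ in Proposition~\ref{prop Gamma and le}((ii)$\Rightarrow$(i)) immediately yields $(Q,\ftilde) \le_K (P,\etilde)$. For surjectivity of the unrestricted map, given $(P, e') \in \BP(KG)$, Proposition~\ref{prop Gamma and blocks}(b) applied to $C_G(P)$ produces a block idempotent $e$ of $LC_G(P)$ with $\etilde = e'$, and then $(P,e)$ is the required preimage.

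For the block-wise restriction, if $(P,e) \in \BP(LGb)$ then $(\{1\}, b) \le_L (P,e)$, and the order-preserving property just established gives $(\{1\}, \btilde) \le_K (P, \etilde)$, i.e.\ $(P, \etilde) \in \BP(KG\btilde)$. For surjectivity onto $\BP(KG\btilde)$, start with $(P, e') \in \BP(KG\btilde)$, pick any preimage $(P, e) \in \BP(LG)$ as above, and observe that $(\{1\}, \btilde) \le_K (P, \etilde)$; Proposition~\ref{prop Gamma and le}((i)$\Rightarrow$(ii)) then supplies $\gamma \in \Gamma$ with $(\{1\}, b) \le_L \lexp{\gamma}(P, e) = (P, \lexp{\gamma}{e})$, so replacing $e$ by $\lexp{\gamma}{e}$ yields a preimage actually lying in $\BP(LGb)$. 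I do not anticipate any serious obstacle here; the authors flag the corollary as immediate. The only point requiring a moment of care is this last step, where one must exploit the freedom to twist a preimage by an element of $\Gamma$ in order to land inside the correct block's poset.
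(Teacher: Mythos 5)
Your proposal is correct and takes exactly the route the paper intends: the authors declare the corollary ``immediate from Proposition~\ref{prop Gamma and le}'', and your argument is precisely that deduction (taking $\gamma=1$ for order-preservation, and the (i)$\Rightarrow$(ii) direction with $(Q,f)=(\{1\},b)$ to twist a preimage into $\BP(LGb)$), supplemented by Proposition~\ref{prop Gamma and blocks} applied to $C_G(P)$ for well-definedness and surjectivity. No gaps.
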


\begin{corollary}\label{cor F and Ftilde}
Let $b$ be a block idempotent of $LG$ and let $(P,e)\in\BP(LGb)$ be a maximal $LGb$-Brauer pair. Then $(P,\etilde)\in\BP(KG\btilde)$ is a maximal $(KG\btilde)$-Brauer pair and one obtains an inclusion of fusion systems
\begin{equation*}
  \calF_{(P,e)}(LGb)\to \calF_{(P,\etilde)}(KG\btilde)
\end{equation*}
which is the identity on objects and on morphisms.
\end{corollary}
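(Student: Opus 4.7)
The plan is to verify the two assertions separately: first, that $(P,\etilde)$ is a maximal $KG\btilde$-Brauer pair; second, that the set-theoretic inclusion of morphisms between the two fusion systems holds. Both pieces will reduce quickly to Propositions \ref{prop Gamma and blocks} and \ref{prop Gamma and le}, together with the uniqueness clause of Theorem \ref{thm Brauer pairs}(a).

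For maximality, I would first invoke Corollary \ref{cor BP(LG) and BP(KG)} to put $(P,\etilde)$ in $\BP(KG\btilde)$. Since $(P,e)$ is maximal, Theorem \ref{thm maximal Brauer pairs}(b) says $P$ is a defect group of $LGb$, and Proposition \ref{prop Gamma and blocks}(c) transfers this to $\btilde$: $P$ is then a defect group of $KG\btilde$. A second application of Theorem \ref{thm maximal Brauer pairs}(b) shows $(P,\etilde)$ is maximal in $\BP(KG\btilde)$.

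For the inclusion of fusion systems, the key preliminary observation is that for every $Q\le P$, if $e_Q$ denotes the unique block idempotent of $LC_G(Q)$ with $(Q,e_Q)\le_L(P,e)$, then the unique block idempotent $f_Q$ of $KC_G(Q)$ with $(Q,f_Q)\le_K(P,\etilde)$ is exactly $\widetilde{e_Q}$. Indeed, applying Proposition \ref{prop Gamma and le} (with $\gamma=1$) to $(Q,e_Q)\le_L(P,e)$ yields $(Q,\widetilde{e_Q})\le_K(P,\etilde)$, and the uniqueness in Theorem \ref{thm Brauer pairs}(a) forces $f_Q=\widetilde{e_Q}$. Now take $\varphi\in\Hom_{\calF_{(P,e)}(LGb)}(Q,R)$ witnessed by $x\in G$, so $\lexp{x}{(Q,e_Q)}\le_L(R,e_R)$. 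Because conjugation by $x$ is $K$-linear and commutes with $\Gamma$, one has $\lexp{x}{\widetilde{e_Q}}=\widetilde{\lexp{x}{e_Q}}$. Applying Proposition \ref{prop Gamma and le} once more to the $L$-relation $(\lexp{x}{Q},\lexp{x}{e_Q})\le_L(R,e_R)$ gives $(\lexp{x}{Q},\widetilde{\lexp{x}{e_Q}})\le_K(R,\widetilde{e_R})$, i.e.\ $\lexp{x}{(Q,\widetilde{e_Q})}\le_K(R,\widetilde{e_R})$. By the previous observation this is exactly the defining condition for $\varphi\in\Hom_{\calF_{(P,\etilde)}(KG\btilde)}(Q,R)$, so the inclusion is the identity on objects and on morphisms.

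I do not anticipate a serious obstacle here; the only genuinely substantive point is the identification $f_Q=\widetilde{e_Q}$, which is really a matter of feeding the $L$-relation into Proposition \ref{prop Gamma and le} and reading off the uniqueness of the block below in the $K$-poset. Once that identification and the compatibility $\lexp{x}{\widetilde{e_Q}}=\widetilde{\lexp{x}{e_Q}}$ are in place, the argument is mechanical.
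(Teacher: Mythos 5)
Your proposal is correct and follows exactly the route the paper intends: the paper declares this corollary immediate from Proposition~\ref{prop Gamma and le} (together with Proposition~\ref{prop Gamma and blocks}(c) and Theorem~\ref{thm maximal Brauer pairs} for maximality), and your argument simply writes out those steps, including the key identification $\etilde_Q=\widetilde{e_Q}$ that the paper itself records at the start of Section~\ref{sec main theorem}. No gaps.
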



\section{The Main Theorem}\label{sec main theorem}

We keep $p$, $G$, $L/K$, and $\Gamma$ as introduced at the beginning of Section~\ref{sec extension of scalars}. 
Moreover we fix a block idempotent $b$ of $LG$ and denote by $\Gamma_b$ the stabilizer of $b$ in $\Gamma$. We fix a maximal $LGb$-Brauer pair $(P,e)\in\BP(LGb)$. For every $Q\le P$, let $e_Q$ denote the unique block idempotent of $LC_G(Q)$ such that $(Q,e_Q)\le (P,e)$ in $\BP(LG)$. By Proposition~\ref{prop Gamma and le}, one has $(Q,\widetilde{e_Q})\le(P,\etilde)$ so that $\widetilde{e_Q}=\etilde_Q$. This allows us to use the notation $\etilde_Q$ for both purposes.
Recall that $\Gamma\times G$ acts on $\BP(LG)$  and $\Gamma_b\times G$ acts on $\BP(LGb)$ via poset isomorphisms. Note that for any $(Q,f)\in\BP(LGb)$ one has $\Gamma_{(Q,f)}=\Gamma_f$. For the stabilizer in $G$ of a $KG$-Brauer pair or $LG$-Brauer pair $(Q,f)$ we will write $N_G(Q,f)$.

\bigskip
Let $p_1\colon G\times\Gamma\to G$ and $p_2\colon G\times \Gamma\to \Gamma$ denote the projection maps. For any subgroup $X$ of $G\times \Gamma$, we set $k_1(X):=\{g\in G\mid (g,1)\in X\}$ and $k_2(X):=\{\gamma\in\Gamma\mid (1,\gamma)\in X\}$. As explained in \cite[p.~24]{Bouc2010a}, one has
\begin{equation}\label{eqn q(X)}
  k_1(X)\trianglelefteq p_1(X)\le G \quad\text{and}\quad k_2(X)\trianglelefteq p_2(X)\le \Gamma\quad \text{with}\quad p_1(X)/k_1(X)\cong p_2(X)/k_2(X)
\end{equation}
via $g k_1(X) \leftrightarrow \gamma k_2(X)$ if and only if $(g,\gamma)\in X$. 

\smallskip
We denote by $K(b)$ and $K(e)$ the subfields of $L$ obtained by adjoining the coefficients of the block idempotents $b\in LG$ and $e\in LC_G(P)$. Thus, $K(b)$ is the fixed field of $\Gamma_b$ in $L$ and $K(e)$ is the fixed field of $\Gamma_e$ in $L$.

\begin{proposition}\label{prop Goursat invariants}
Let $b$ be a block idempotent of $LG$.

\smallskip
{\rm (a)} For any $(R,e_R)\le (Q,e_Q)$ in $\BP(LGb)$ one has $\Gamma_e=\Gamma_{(P,e)}\le \Gamma_{(Q,e_Q)}\le \Gamma_{(R,e_R)}\le \Gamma_{(\{1\},b)}=\Gamma_b$. In particular, $K(b)\subseteq K(e)$.

\smallskip
{\rm (b)} Let $X:=\stab_{G\times\Gamma}(P,e)$ be the stabilizer of the maximal $LGb$-Brauer pair $(P,e)$. One has
\begin{equation*}
  k_1(X)=N_G(P,e)\,,\quad p_1(X)=N_G(P,\etilde)\,,\quad k_2(X)=\Gamma_e\,,\quad\text{and}\quad 
  p_2(X)=\Gamma_b\,.
\end{equation*}

\smallskip
{\rm (c)} One has $N_G(P,e)\trianglelefteq N_G(P,\etilde)$ and $N_G(P,\etilde)/N_G(P,e)\cong \Gamma_b/\Gamma_e$. Moreover, $K(e)/K(b)$ is a Galois extension with cyclic Galois group isomorphic to $N_G(P,\etilde)/N_G(P,e)$.
\end{proposition}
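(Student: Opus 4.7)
The plan is to prove (a), (b), (c) in order. Part (a) is a cascading uniqueness argument based on Theorem~\ref{thm Brauer pairs}(a). Part (b) is a direct unpacking of the stabilizer of $(P,e)$ under the commuting $G$- and $\Gamma$-actions on $\BP(LG)$. Part (c) combines (b) with the Goursat relation~(\ref{eqn q(X)}) together with an algebraicity property of the coefficients of block idempotents in characteristic $p$ to deduce the Galois and cyclicity statements.

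For (a), the outer equalities $\Gamma_e=\Gamma_{(P,e)}$ and $\Gamma_b=\Gamma_{(\{1\},b)}$ are immediate from the definition of the $\Gamma$-action on $\BP(LG)$, which is trivial on the first coordinate. For each middle inclusion I take $\gamma$ in the stabilizer of the larger Brauer pair and apply $\gamma$ to the relation defining the smaller one; since conjugation and Brauer homomorphisms commute with $\Gamma$, the $\Gamma$-action respects $\le$, so for instance, given $(R,e_R)\le(Q,e_Q)$ and $\gamma\in\Gamma_{(Q,e_Q)}$, one obtains $(R,\lexp{\gamma}{e_R})\le(Q,e_Q)$ and the uniqueness clause of Theorem~\ref{thm Brauer pairs}(a) forces $\lexp{\gamma}{e_R}=e_R$. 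The same argument applied to $(Q,e_Q)\le(P,e)$ and to $(\{1\},b)\le(R,e_R)$ yields the remaining inclusions. The containment $K(b)\subseteq K(e)$ then follows from $\Gamma_e\subseteq\Gamma_b$ by the Galois correspondence $K(x)=L^{\Gamma_x}$.

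For (b), the identifications $k_1(X)=N_G(P,e)$ and $k_2(X)=\Gamma_e$ are immediate from the defining $(G\times\Gamma)$-action. For $p_1(X)$, I write $\etilde=\sum_{\gamma\in[\Gamma/\Gamma_e]}\lexp{\gamma}{e}$ as the sum of its distinct primitive-idempotent constituents in $Z(LC_G(P))$; since any $g\in N_G(P)$ acts on $LC_G(P)$ by an $L$-algebra automorphism, it permutes these primitive idempotents, so $\lexp{g}{\etilde}=\etilde$ iff $\lexp{g}{e}\in\Gamma\cdot e$, which is exactly the existence of $\gamma$ with $(g,\gamma)\in X$. For $p_2(X)\subseteq\Gamma_b$, centrality of $b$ turns $\lexp{g\gamma}{b}=b$ into $\lexp{\gamma}{b}=b$; conversely, for $\gamma\in\Gamma_b$ the pair $(P,\lexp{\gamma}{e})$ is again a maximal $LGb$-Brauer pair, so Theorem~\ref{thm maximal Brauer pairs}(a) supplies $g\in N_G(P)$ with $\lexp{g}{(P,\lexp{\gamma}{e})}=(P,e)$, witnessing $(g,\gamma)\in X$.

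For (c), the normality statements and the isomorphism $N_G(P,\etilde)/N_G(P,e)\cong\Gamma_b/\Gamma_e$ follow directly from (b) together with (\ref{eqn q(X)}). Since $K(e)=L^{\Gamma_e}$ and $K(b)=L^{\Gamma_b}$, this also makes $K(e)/K(b)$ Galois with group $\Gamma_b/\Gamma_e$. The main obstacle is the cyclicity. I would handle it by invoking the fact that the coefficients of any block idempotent of a group algebra over a field of characteristic $p$ lie in the relative algebraic closure $L_0:=L\cap\overline{\FF_p}$ of the prime field, since such block idempotents already appear in $\overline{\FF_p}G$ (they are determined by Brauer characters). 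Consequently $K(b), K(e)\subseteq KL_0$, and $\Gal(K(e)/K(b))$ is a finite subquotient of $\Gal(KL_0/K)\cong\Gal(L_0/L_0\cap K)$, which embeds into the procyclic group $\Gal(\overline{\FF_p}/\FF_p)\cong\hat{\ZZ}$; every finite subquotient of $\hat{\ZZ}$ is cyclic.
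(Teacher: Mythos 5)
Your proof is correct, and parts (a) and (b) follow essentially the same route as the paper's: the cascading uniqueness argument via Theorem~\ref{thm Brauer pairs}(a) for (a), and the same back-and-forth between $\lexp{g}{e}\in\Gamma\cdot e$ and $\lexp{g}{\etilde}=\etilde$ (resp.\ the use of Theorem~\ref{thm maximal Brauer pairs}(a) to produce the witness $g$ for $\gamma\in\Gamma_b$) for (b). One expository point in (b): for $p_2(X)\subseteq\Gamma_b$ you start from $\lexp{g\gamma}{b}=b$; this needs the (easy, but worth a line) observation that the $G\times\Gamma$-action permutes the sets $\BP(LG,b')$ compatibly with its action on blocks, so a stabilizer of $(P,e)\in\BP(LG,b)$ must fix $b$ — the paper instead deduces it from $(\{1\},b)\le(P,e)$ and the uniqueness in Theorem~\ref{thm Brauer pairs}(a). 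Where you genuinely diverge is the cyclicity in (c). The paper stays entirely at the finite level: it notes that the coefficients of $e$ generate a finite field $\FF_p(e)\subseteq K(e)$ and that restriction gives an injective homomorphism $\Delta=\Gal(K(e)/K(b))\hookrightarrow\Gal(\FF_p(e)/\FF_p)$ (injective because an element of $\Delta$ fixing $\FF_p(e)$ also fixes $K$, hence all of $K(e)$), and the target is cyclic. You instead pass through $KL_0$ with $L_0=L\cap\overline{\FF_p}$ and argue that $\Gal(K(e)/K(b))$ is a finite subquotient of a procyclic group; this works (closed subgroups and continuous quotients of $\hat{\ZZ}$ are procyclic, so finite subquotients are cyclic), but it is heavier machinery for the same underlying fact — namely that block idempotent coefficients are algebraic over $\FF_p$ — and your parenthetical justification of that fact via Brauer characters is the one place I would ask for a precise reference or argument (idempotent lifting over a finite splitting field of $Z(\FF_pG)/J(Z(\FF_pG))$ is the standard route). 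The paper's version has the small advantage of producing the cyclic group explicitly as a subgroup of $\Gal(\FF_p(e)/\FF_p)$ without invoking profinite theory.
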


\begin{proof}
(a) It suffices to show that $\Gamma_{(Q,e_Q)}\le\Gamma_{(R,e_R)}$. Let $\gamma\in\Gamma_{(Q,e_Q)}$. Then $\lexp{\gamma}{(R,e_R)}\le_L\lexp{\gamma}{(Q,e_Q)}=(Q,\lexp{\gamma}{e_Q})=(Q,e_Q)$. The uniqueness part of Theorem~\ref{thm Brauer pairs}(a) implies that $\lexp{\gamma}{e_R}=e_R$. Thus, $\gamma\in\Gamma_{(R,e_R)}$.

\smallskip
(b) The first equation is clear from the definition of $k_1(X)$. For the proof of the second equation, let $g\in p_1(X)$. Then there exists $\gamma\in\Gamma$ with $(P,e)=\lexp{(g,\gamma)}{(P,e)}=(\lexp{g}{P},\lexp{g\gamma}{e})$. From $\lexp{g\gamma}{e}=e$ it follows that $\lexp{g}{\etilde}=\etilde$. Thus $\lexp{g}(P,\etilde)=(P,\etilde)$ and $g\in N_G(P,\etilde)$. Conversely, if $g\in N_G(P,\etilde)$ then $\lexp{g}{\etilde}=\etilde$ which implies that there exists $\gamma\in\Gamma$ with $\lexp{g}{e}=\lexp{\gamma}{e}$. Thus, $\lexp{(g,\gamma^{-1})}{(P,e)} =(P,e)$ and $g\in p_1(X)$. The third equation follows immediately from the definition of $k_2(X)$.  For the proof of the fourth equation let $\gamma\in p_2(X)$. Then there exists $g\in G$ with $\lexp{(g,\gamma)}{(P,e)}=(P,e)$. Since $(\{1\},b)\le (P,e)$, this implies $\lexp{(g,\gamma)}{(\{1\},b)}\le\lexp{(g,\gamma)}{(P,e)}=(P,e)$. The uniqueness part in Theorem~\ref{thm Brauer pairs}(a) implies that $\lexp{(g,\gamma)}{(\{1\},b)}=(1,b)$ and that $\gamma\in\Gamma_b$. Conversely, assume that $\gamma\in \Gamma_b$. Then $(\{1\},b)\le (P,e)$ implies $(\{1\},b)=\lexp{(1,\gamma)}{(\{1\},b)}\le\lexp{(1,\gamma)}{(P,e)}=(P,\lexp{\gamma}{e})$. This implies that both $(P,e)$ and $\lexp{\gamma}{(P,e)}$ are maximal $LGb$-Brauer pairs. By Theorem~\ref{thm maximal Brauer pairs}(a), there exists $g\in G$ such that $\lexp{g}{(P,\lexp{\gamma}{e})}=(P,e)$. Thus $(g,\gamma)\in X$ and $\gamma\in p_2(X)$.

\smallskip
(c) The assertions of the first sentence follow from Part~(b) and (\ref{eqn q(X)}). For the second statement it suffices to show that $\Gamma_b/\Gamma_e$ is cyclic. Note that the coefficients of $e\in LC_G(P)$ generate a finite field extension of the prime field $\FF_p$ in $L$, which we denote by $\FF_p(e)$. Since $\Gamma_e\trianglelefteq\Gamma_b$, we have a Galois extension $K(e)/K(b)$ with Galois group $\Delta\cong \Gamma_b/\Gamma_e$. Now, restriction from $K(e)$ to $\FF_p(e)$ is an injective group homomorphism from $\Delta$ to the cyclic Galois group $\Gal(\FF_p(e)/\FF_p)$. In fact, if $\delta\in\Delta$ restricts to the identity on $\FF_p(e)$, then it is the identity on $\FF_p(e)$ and on $K$, thus on $K(e)$. This completes the proof of Part~(c).
\end{proof}

Next we give a more precise picture of the inclusion of fusion systems from Corollary~\ref{cor F and Ftilde}. In the following theorem the term $\langle \calF,\sigma\rangle$ denotes the fusion system {\em generated} by $\calF$ and $\sigma$, i.e., the intersection of all fusion systems over $P$ that contain $\calF$ and $\sigma$.

\begin{theorem}\label{thm A}
Let $L/K$ be a finite Galois extension of fields of characteristic $p>0$ with Galois group $\Gamma$, let $b$ be a block idempotent of $LG$, and let $(P,e)$ be a maximal $LGb$-Brauer pair. Set $\calF:=\calF_{(P,e)}(LGb)$ and $\calFtilde:=\calF_{(P,\etilde)}(KG\btilde)$. Let $g_0\in N_G(P,e)$ be such that $g_0N_G(P,\etilde)$ generates $N_G(P,e)/N_G(P,\etilde)$ (see Proposition~\ref{prop Goursat invariants}(c)) and set $\sigma:=c_{g_0}\in\Aut(P)$. Then $\calFtilde=\langle \calF, \sigma \rangle$. 

More precisely, $\sigma\in\Aut_{\calFtilde}(P)$ and, for any subgroups $Q$ and $R$ of $P$ and any $\varphi\in\Hom_{\calFtilde}(Q,R)$, there exist $i\in\ZZ$, $\psi\in\Hom_{\calF}(Q,\sigma^{-i}(R))$ and $\psi'\in\Hom_{\calF}(\sigma^i(Q),R)$ with $\varphi=\sigma^i|_{\sigma^{-i}(R)}\circ \psi=\psi'\circ\sigma^i|_Q$.
\end{theorem}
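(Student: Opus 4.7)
The containment $\langle\calF,\sigma\rangle\subseteq\calFtilde$ is immediate: Corollary~\ref{cor F and Ftilde} gives $\calF\subseteq\calFtilde$, and $g_0\in N_G(P,\etilde)$ implies $\lexp{g_0}(P,\etilde)=(P,\etilde)$, so that $\sigma=c_{g_0}|_P\in\Aut_{\calFtilde}(P)$. Everything else in the theorem---both the two factorizations and the reverse inclusion---follows once the factorization statement is proved, so that is where I would focus.

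Given $\varphi=c_x|_Q\in\Hom_{\calFtilde}(Q,R)$, my strategy is to find the correct power of $\sigma$ by which to ``twist'' $\varphi$ so that it becomes a morphism of $\calF$. Starting from $\lexp{x}(Q,\etilde_Q)\le_K(R,\etilde_R)$, Proposition~\ref{prop Gamma and le} produces $\gamma\in\Gamma$ with $\lexp{x}(Q,e_Q)\le_L(R,\lexp{\gamma}e_R)$; by $\Gamma$-equivariance of $\le_L$, this extends to $\lexp{x}(Q,e_Q)\le_L(R,\lexp{\gamma}e_R)\le_L(P,\lexp{\gamma}e)$. Since the bottom pair lies in $\BP(LGb)$, $(P,\lexp{\gamma}e)$ is a maximal $(LG,b)$-Brauer pair, so Theorem~\ref{thm maximal Brauer pairs}(a) yields $h\in N_G(P)$ with $\lexp{h}(P,\lexp{\gamma}e)=(P,e)$. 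Hence $(h,\gamma)\in\stab_{G\times\Gamma}(P,e)=:X$, and Proposition~\ref{prop Goursat invariants}(b,c) gives $h\in N_G(P,\etilde)$; the cyclic quotient description combined with the hypothesis on $g_0$ then allows me to write $h=g_0^{-i}n$ for some $i\in\ZZ$ and some $n\in N_G(P,e)$.

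With this $i$ in hand, both factorizations drop out. Using $(g_0,\gamma_0)\in X$ to compute $\lexp{g_0^i}e=\gamma_0^{-i}\cdot e$, and absorbing $n$ via the normality $N_G(P,e)\trianglelefteq N_G(P,\etilde)$, one verifies $\lexp{g_0^i}(P,e)=(P,\lexp{\gamma}e)$. Applying $c_{g_0^{-i}}$ to the whole chain and invoking the uniqueness in Theorem~\ref{thm Brauer pairs}(a) identifies the middle block idempotent as $e_{\lexp{g_0^{-i}}R}$, so that $\psi:=c_{g_0^{-i}x}|_Q\in\Hom_\calF(Q,\sigma^{-i}R)$ and $\varphi=\sigma^i|_{\sigma^{-i}R}\circ\psi$ via the identity $c_x=c_{g_0^i}\circ c_{g_0^{-i}x}$. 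For the left factorization I set $\psi':=c_{xg_0^{-i}}|_{\sigma^iQ}$ and verify $\psi'\in\Hom_\calF(\sigma^iQ,R)$ by applying uniqueness in Theorem~\ref{thm Brauer pairs}(a) twice to obtain the transport identities $e_{\lexp{g_0^i}Q}=\gamma_0^i\cdot\lexp{g_0^i}e_Q$ (comparing $\lexp{g_0^i}(Q,e_Q)$ and $\lexp{\gamma_0^i}(Q,e_Q)$ beneath $(P,\gamma_0^i\cdot e)$) and $e_{\lexp{x}Q}=\gamma_0^i\cdot\lexp{x}e_Q$ (from $\lexp{x}(Q,e_Q)\le_L(P,\lexp{\gamma}e)$), whose combination forces $\lexp{xg_0^{-i}}e_{\lexp{g_0^i}Q}=e_{\lexp{x}Q}$.

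The main obstacle will be keeping the two commuting actions---conjugation by $G$ and coefficient action by $\Gamma$---straight, in particular tracking which block idempotent sits beneath which maximal Brauer pair when we conjugate by $g_0^i$ or translate by $\gamma_0^i$. Once that bookkeeping is under control, each individual verification reduces to the uniqueness statement in Theorem~\ref{thm Brauer pairs}(a).
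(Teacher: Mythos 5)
Your proposal is correct and follows essentially the same route as the paper's proof: use Proposition~\ref{prop Gamma and le} to produce $\gamma$, observe that $(P,\lexp{\gamma}{e})$ is again a maximal $LGb$-Brauer pair, pull it back to $(P,e)$ by an element $h$ which Proposition~\ref{prop Goursat invariants} places in $N_G(P,\etilde)$, and expand $h$ modulo $N_G(P,e)$ as a power of $g_0$; your $\psi=c_{g_0^{-i}x}|_Q$ and $\psi'=c_{xg_0^{-i}}|_{\sigma^i(Q)}$ coincide with the morphisms the paper constructs via its intermediate maps $\alpha,\beta$ and $\alpha',\beta'$. The only differences are in bookkeeping (you transport the whole chain of Brauer pairs by $c_{g_0^{-i}}$ and invoke the Goursat element $(g_0,\gamma_0)\in X$ explicitly, where the paper factors $c_h=\sigma^i\circ\beta$), and your verifications of the transport identities via the uniqueness in Theorem~\ref{thm Brauer pairs}(a) are sound.
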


\begin{proof}
Since $g_0\in N_G(P,\etilde)$, we have $\sigma=c_{g_0}\in\Aut_{\calFtilde}(P)$. It follows that $\langle \calF,\sigma\rangle\subseteq \calFtilde$. In order to prove the reverse inclusion, let $Q$ and $R$ be subgroups of $P$ and let $\varphi\in\Hom_{\calFtilde}(Q,R)$. Then there exists $g\in G$ such that $\varphi=c_g\colon Q\to R$ and $\lexp{g}{(Q,\etilde_Q)}\le_K (R,\etilde_R)$. By Proposition~\ref{prop Gamma and le} there exists $\gamma\in\Gamma$ such that $\lexp{g}{(Q,e_Q)}\le_L (R,\lexp{\gamma}{e_R})$. Since $(\{1\},b)=\lexp{g}{(\{1\},b)} \le_L \lexp{g}{(Q,e_Q)}\le_L(R,\lexp{\gamma}{e_R})$ and also $(\{1\},\lexp{\gamma}{b})\le_L(R,\lexp{\gamma}{e_R})$, Theorem~\ref{thm Brauer pairs}(a) implies $(\{1\},b)=(\{1\},\lexp{\gamma}{b})$ so that $\gamma\in\Gamma_b$. Thus, both $(P,e)$ and $(P,\lexp{\gamma}{e})$ are maximal $LGb$-Brauer pairs. Theorem~\ref{thm maximal Brauer pairs}(a) implies that there exists $h\in G$ such that $\lexp{h}{(P,e)}=(P,\lexp{\gamma}{e})$ and we obtain $(P,e)=\lexp{h^{-1}}{(P,\lexp{\gamma}{e})}\ge_L \lexp{h^{-1}}{(R,\lexp{\gamma}{e_R})} = (\lexp{h^{-1}}{R},\lexp{h^{-1}\gamma}{e_R})$. Again, Theorem~\ref{thm Brauer pairs}(a) implies that $\lexp{h^{-1}\gamma}{e_R}=e_{h^{-1}Rh}$ and therefore $\lexp{h^{-1}g}{(Q,e_Q)}\le_L\lexp{h^{-1}}{(R,\lexp{\gamma}{e_R})} = (\lexp{h^{-1}}{R},e_{h^{-1}Rh})$. This in turn implies that the homomorphism $\alpha:= c_{h^{-1}g}\colon Q\to \lexp{h^{-1}}{R}$ belongs to $\Hom_{\calF}(Q,\lexp{h^{-1}}{R})$ and that the homomorphism $\varphi=c_g\colon Q\to R$ factors as
\begin{equation}\label{eqn phi factorization}
  \varphi=c_h\circ \alpha\colon Q\to \lexp{h^{-1}}{R}\to R\,.
\end{equation}
Since $\lexp{h}{(P,e)}=(P,\lexp{\gamma}{e})$, we obtain $h\in N_G(P,\etilde)$ and can write $h=g_0^ix$ for some $i\in\ZZ$ and $x\in N_G(P,e)$. This implies that the map $c_h\colon P\to P$ factors as $c_h=\sigma^i\circ\beta\colon P\to P$ where $\sigma^i=c_{g_0^i}\colon P\to P$ and $\beta:=c_x\in \Aut_\calF(P)$, since $x\in N_G(P,e)$. Restriction to $\lexp{h^{-1}}{R}$ yields the factorization
\begin{equation*}
  c_h|_{h^{-1}Rh} = \sigma^i|_{\beta(h^{-1}Rh)}\circ \beta|_{h^{-1}Rh}\colon \lexp{h^{-1}}{R} \to \beta(\lexp{h^{-1}}{R})\to R
\end{equation*}
with $\beta(\lexp{h^{-1}}{R})=\sigma^{-i}(R)$ and $\beta|_{h^{-1}Rh}\in\Hom_{\calF}(\lexp{h^{-1}}{R}, R)$. Setting $\psi:=\beta|_{h^{-1}Rh}\circ\alpha\colon Q\to \sigma^{-1}(R)$ and using (\ref{eqn phi factorization}) we obtain the desired factorization of $\varphi$. This also implies the inclusion $\calFtilde\subseteq\langle \calF,\sigma\rangle$.

\smallskip
In order to find $\psi'$ with the desired property we use the elements $g$, $h$, $x$, and $i$ from the first part of the proof and  note that
\begin{equation*}
  (P,e) = \lexp{\gamma^{-1}h}({P,e)}\ge_L \lexp{\gamma^{-1}}{(\lexp{h}{Q},\lexp{h}{e_Q})} = (\lexp{h}{Q},\lexp{\gamma^{-1}h}{e_Q})\,,
\end{equation*}
which implies that $\lexp{\gamma^{-1}h}{e_Q}=e_{hQh^{-1}}$. Thus,
\begin{equation*}
  \lexp{gh^{-1}}{(\lexp{h}{Q},e_{hQh^{-1}})} = \lexp{gh^{-1}}{(\lexp{h}{Q},\lexp{\gamma^{-1}h}{e_Q})} = (\lexp{g}{Q},\lexp{g\gamma^{-1}}{e_Q}) \le_L(R,e_R)\,,
\end{equation*}
which implies that $\alpha':=c_{gh^{-1}}\colon \lexp{h}{Q}\to R$ belongs to $\Hom_\calF(\lexp{h}{Q},R)$. Thus, $\varphi$ can be factored as
\begin{equation}\label{eqn phi factorization 2}
  \varphi= c_g=c_{gh^{-1}}\circ c_h=\alpha'\circ c_h\colon Q\to \lexp{h}{Q}\to R\,.
\end{equation} 
We can rewrite $h=g_0^ix=x'g_0^i$ for some $x'\in N_G(P,e)$ and obtain an element $\beta'\in\Aut_{\calF}(P)$ together with a factorization $c_h=\beta'\circ \sigma^i\colon P\to P$. Restricting this equation to $Q$ yields a factorization
\begin{equation*}
  c_h= \beta'|_{\sigma^i(Q)}\circ \sigma^i|_Q\colon Q\to \sigma^i(Q)\to \lexp{h}{Q}\,.
\end{equation*}
Setting $\psi':=\alpha'\circ\beta'|_{\sigma^i(Q)}\in\Hom_{\calF}(\sigma^i(Q),R)$, the factorization in (\ref{eqn phi factorization 2}) can now be expressed as $\varphi = \psi'\circ\sigma^i|_Q$ as claimed.
\end{proof}


\section{Consequences of the Main Theorem}\label{consequences}

In this section we prove several consequences of Theorem~\ref{thm A}.


Recall that if $\calF$ is a fusion system over a $p$-group $P$, a subgroup $Q$ of $P$ is called {$\calF$-centric} if $C_P(R)=Z(R)$ for all subgroups $R$ of $P$ which are $\calF$-isomorphic to $Q$.

\begin{proposition}\label{prop calF calFtilde centralized}
Let $L/K$, $b$, $(P,e)$ and $\calF\subseteq\calFtilde$ be as in Theorem~\ref{thm A}.

\smallskip
{\rm (a)} A subgroup $Q$ of $P$ is fully $\calF$-centralized if and only if it is fully $\calFtilde$-centralized.

\smallskip
{\rm (b)} A subgroup $Q$ of $P$ is fully $\calF$-normalized if and only if it is fully $\calFtilde$-normalized.

\smallskip
{\rm (c)} A subgroup $Q$ of $P$ is $\calF$-centric if and only it is $\calFtilde$-centric.
\end{proposition}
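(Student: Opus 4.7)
The key input is Theorem~\ref{thm A}: every $\calFtilde$-morphism between subgroups of $P$ factors as $\sigma^i \circ \psi$ for some $\psi \in \Hom_\calF$ and some $i \in \ZZ$, where $\sigma = c_{g_0} \in \Aut(P)$. My plan is to extract from this the following structural description of isomorphism classes: for any $Q \le P$, the set of $\calFtilde$-isomorphs of $Q$ in $P$ is exactly $\{\sigma^i(R) : i \in \ZZ,\ R \text{ is } \calF\text{-isomorphic to } Q\}$. This is immediate from the factorization (with $R = \sigma^{-i}(Q')$ when $Q'$ is the target), together with the obvious converse that composing an $\calF$-isomorphism $Q \to R$ with the restriction of $\sigma^i$ yields an $\calFtilde$-isomorphism $Q \to \sigma^i(R)$.

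Once this description is in hand, parts (a) and (b) reduce to the remark that $\sigma$ is a genuine automorphism of the ambient group $P$, so that $\sigma^i(C_P(R)) = C_P(\sigma^i(R))$ and $\sigma^i(N_P(R)) = N_P(\sigma^i(R))$ as subgroups of $P$; in particular
\begin{equation*}
  |C_P(\sigma^i(R))| = |C_P(R)|\quad\text{and}\quad |N_P(\sigma^i(R))| = |N_P(R)|.
\end{equation*}
Consequently, the multiset of values $|C_P(Q')|$ (resp.\ $|N_P(Q')|$) as $Q'$ ranges over the $\calFtilde$-isomorphism class of $Q$ coincides, up to multiplicity, with the corresponding multiset over the $\calF$-isomorphism class of $Q$ — and, crucially, both sets contain $|C_P(Q)|$ (resp.\ $|N_P(Q)|$) itself, since $Q$ belongs to both classes. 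Therefore $|C_P(Q)|$ achieves the maximum over the $\calF$-class if and only if it achieves the maximum over the $\calFtilde$-class, proving (a), and the identical argument with $N_P$ in place of $C_P$ proves (b).

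For part (c), I will again use that $\sigma^i \in \Aut(P)$ intertwines the formation of centralizers and of centers inside $P$: $C_P(\sigma^i(R)) = \sigma^i(C_P(R))$ and $Z(\sigma^i(R)) = \sigma^i(Z(R))$. Hence the condition $C_P(R) = Z(R)$ is transported by $\sigma^i$ to the same condition on $\sigma^i(R)$, so both conditions hold simultaneously. If $Q$ is $\calF$-centric, then every $\calF$-isomorph $R$ of $Q$ satisfies $C_P(R) = Z(R)$, hence every element $\sigma^i(R)$ of the $\calFtilde$-class of $Q$ does as well, giving $\calFtilde$-centricity. Conversely, if $Q$ is $\calFtilde$-centric, then in particular every $\calF$-isomorph of $Q$ (which lies in the $\calFtilde$-class via $i=0$) satisfies $C_P(R)=Z(R)$, giving $\calF$-centricity.

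The only mild subtlety I foresee is keeping track of the fact that an $\calF$-iso\-mor\-phism class may genuinely be a proper subset of an $\calFtilde$-iso\-mor\-phism class (the $\sigma$-orbit can merge several $\calF$-classes), so the arguments above must use that $Q$ itself lies in both classes and that $\sigma^i$ preserves the relevant numerical invariants. No further block-theoretic input is required; everything is a formal consequence of Theorem~\ref{thm A} together with the fact that $\sigma$ is an honest automorphism of $P$.
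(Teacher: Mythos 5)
Your proof is correct and follows essentially the same route as the paper's: both extract from Theorem~\ref{thm A} that the $\calFtilde$-isomorphism class of $Q$ consists of the subgroups $\sigma^i(R)$ with $R$ ranging over the $\calF$-isomorphism class of $Q$, and then use that $\sigma^i\in\Aut(P)$ transports $C_P(-)$, $N_P(-)$ and $Z(-)$. No issues.
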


\begin{proof}
The \lq if\rq-parts follow immediately from the fact that the $\calF$-isomorphism class of $Q$ is a subset of the $\calFtilde$-isomorphism class of $Q$. For the forward implications note that by Theorem~\ref{thm A} two subgroups $Q$ and $Q'$ of $P$ are $\calFtilde$-isomorphic if and only if there exists a subgroup $Q''$ of $P$ such that $Q$ is $\calF$-isomorphic to $Q''$ and $Q'=\sigma^i(Q'')$ for some $i\in \ZZ$. Moreover, $\sigma^i(C_P(Q''))=C_P(\sigma^i(Q''))$, $\sigma^i(N_P(Q''))=N_P(\sigma^i(Q''))$, and $\sigma^i(Z(Q''))=Z(\sigma^i(Q''))$, since $\sigma^i$ is an automorphism of $P$. The result is now immediate.
\end{proof}

The following Theorem is known to experts. See for instance the part of the proof of \cite[Theorem~8.5.2]{Linckelmann2018} dealing with the extension axiom and note that it does not use any assumptions on the field of coefficients $k$. Below is a proof with a different approach, using Theorem~\ref{thm A}.

\begin{theorem}\label{thm B}
Let $k$ be a field of characteristic $p>0$ and let $c$ be a block idempotent of $kG$. Then the extension axiom holds for the fusion system of $kGc$, for any choice of maximal Brauer pair.
\end{theorem}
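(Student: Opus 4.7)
The plan is to reduce to the case where the fusion system is already known to be saturated and then to transport the extension axiom back via Theorem~\ref{thm A}.

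First, I would set $K:=k$ and choose $L$ to be a finite Galois extension of $k$ such that every block algebra $LC_G(P)e$ is split; concretely, $L:=k(\zeta_n)$ with $n$ the exponent of $G$ works, since roots of unity are separable in characteristic $p$ and by Brauer's theorem $L$ is a splitting field for $C_G(P)$. Let $(P,f)$ denote the given maximal $(kGc)$-Brauer pair. Using Proposition~\ref{prop Gamma and blocks}, pick a block idempotent $b$ of $LG$ with $\btilde=c$; by Proposition~\ref{prop Gamma and blocks}(c) the defect groups of $b$ and $c$ coincide, so after a $G$-conjugation a maximal $LGb$-Brauer pair takes the form $(P,e_0)$. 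Its image $(P,\widetilde{e_0})$ is then a maximal $(kGc)$-Brauer pair by Corollary~\ref{cor BP(LG) and BP(KG)} and Theorem~\ref{thm maximal Brauer pairs}, hence $N_G(P)$-conjugate to $(P,f)$. A final conjugation by the relevant element of $N_G(P)$ (and the corresponding replacement of $b$ by a $G$-conjugate) produces a maximal $LGb$-Brauer pair $(P,e)$ with $\etilde=f$. The fusion system $\calF:=\calF_{(P,e)}(LGb)$ is then saturated by Theorem~\ref{thm splitting field implies saturated}, and $\calFtilde:=\calF_{(P,\etilde)}(KG\btilde)$ is exactly the fusion system $\calF_{(P,f)}(kGc)$ for which we want the extension axiom.

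Given $\varphi\in\Hom_{\calFtilde}(Q,P)$ with $\varphi(Q)$ fully $\calFtilde$-normalized, Theorem~\ref{thm A} yields $i\in\ZZ$ and $\psi\in\Hom_{\calF}(Q,P)$ with $\varphi=\sigma^i\circ\psi$, where $\sigma\in\Aut_{\calFtilde}(P)$ is the distinguished automorphism. Since $\sigma^{-i}\in\Aut(P)$ satisfies $\sigma^{-i}(N_P(\varphi(Q)))=N_P(\psi(Q))$, the subgroup $\psi(Q)$ is $\calFtilde$-isomorphic to $\varphi(Q)$ and has $P$-normalizer of the same order, hence is again fully $\calFtilde$-normalized; Proposition~\ref{prop calF calFtilde centralized}(b) then upgrades this to fully $\calF$-normalized. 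Applying the extension axiom for the saturated system $\calF$ supplies $\bar\psi\in\Hom_{\calF}(N_\psi,P)$ extending $\psi$.

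The plan concludes by showing $N_\varphi=N_\psi$ as subgroups of $N_P(Q)$; once this is established, the composite $\psitilde:=\sigma^i\circ\bar\psi\in\Hom_{\calFtilde}(N_\varphi,P)$ is the required extension of $\varphi$, since $\psitilde|_Q=\sigma^i\circ\psi=\varphi$. For the equality of the $N$-sets, given $y\in N_P(Q)$ the identity $\sigma^i c_z=c_{\sigma^i(z)}\sigma^i$ together with $\sigma^i(N_P(\psi(Q)))=N_P(\varphi(Q))$ converts an intertwining $\psi c_y=c_z\psi$ into $\varphi c_y=c_{\sigma^i(z)}\varphi$ and vice versa. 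This last bookkeeping, which hinges only on $\sigma$ being an honest automorphism of $P$, is the one genuine step beyond invoking results already at hand, and I expect no real obstacle there; the main conceptual point of the proof is the initial reduction to the split situation, where Theorem~\ref{thm A} makes the passage between $\calF$ and $\calFtilde$ sufficiently transparent to carry extensions across.
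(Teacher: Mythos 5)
Your proposal is correct and follows essentially the same route as the paper's own proof: pick a Galois splitting field $L$ over $K=k$, pull $(P,f)$ back to a maximal $LGb$-Brauer pair $(P,e)$ with $\etilde=f$, invoke Theorem~\ref{thm A} to factor $\varphi=\sigma^i\circ\psi$, extend $\psi$ using the saturated system $\calF$, and transport the extension back via $\sigma^i$ after comparing $N_\varphi$ with $N_\psi$. The only cosmetic differences are that the paper needs only $L$ to split $kC_G(P)f$ rather than all of $C_G(P)$, and that the paper proves the single inclusion $N_\varphi\subseteq N_\psi$ (which already suffices by restricting $\psihat$), whereas you establish the full equality $N_\varphi=N_\psi$.
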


\begin{proof}
Let $(P,f)$ be a maximal $kGc$-Brauer pair. We apply Theorem~\ref{thm A} with $K=k$, a splitting field $L$ of $KC_G(P)f$ such that $L/K$ is a finite Galois extension with Galois group $\Gamma$, and to a block idempotent $b$ of $LG$ with $cb\neq 0$. Then $c=\btilde$. Moreover, there exists a maximal $LGb$-Brauer pair $(P,e)$ such that $ef=e$ and therefore $f=\etilde$. We aim to show that the fusion system $\calFtilde=\calF_{(P,\etilde)}(KG\btilde)$ satisfies the extension axiom. Note that by Theorem~\ref{thm splitting field implies saturated}, the extension axoim holds for $\calF=\calF_{(P,e)}(LGb)$, since $L$ is a splitting field of $LC_G(P)e$. Let $\varphi\in\Hom_{\calFtilde}(Q,P)$ be such that $\varphi(Q)$ is fully $\calFtilde$-normalized. By Theorem~\ref{thm A} we can factorize $\varphi=\sigma^i\circ \psi$ for some $\psi\in \Hom_{\calF}(Q,P)$. 
With $\varphi(Q)$ also $\psi(Q)=\sigma^{-i}(\varphi(Q))$ is fully $\calFtilde$-normalized, since they are $\calFtilde$-isomorphic and $N_P(\psi(Q))=\sigma^{-i}(N_P(\varphi(Q)))$. By Proposition~\ref{prop calF calFtilde centralized}(b), $\psi(Q)$ is fully $\calF$-normalized. Since $\calF$ satisfies the extension axiom, there exists $\psihat\in\Hom_{\calF}(N_\psi,P)$ such that $\psihat|_Q=\psi$. It follows that $\varphihat:=\sigma^i\circ \psihat\in\Hom_{\calFtilde}(N_\psi,P)$ extends $\varphi$. To finish the proof it suffices to show that $N_\varphi\subseteq N_\psi$. So let $x\in N_\varphi$. Then $x\in N_P(Q)$ and there exists $y\in N_P(\varphi(Q))$ with $ \varphi\circ c_x=c_y\circ\varphi\colon Q\myiso \varphi(Q)$. But this implies
\begin{equation*}
  \psi\circ c_x = \sigma^{-i}\circ \varphi\circ c_x = \sigma^{-i}\circ c_y\circ \varphi = c_{\sigma^{-i}(y)}\circ \sigma^{-i}\circ \varphi = c_{\sigma^{-i}(y)}\circ\psi\,,
\end{equation*}
with $\sigma^{-i}(y)\in \sigma^{-i}(N_P(\varphi(Q)))=N_P(\sigma^{-i}(\varphi(Q)))= N_P(\psi(Q))$. Thus, $N_\varphi\subseteq N_\psi$ and the proof is complete.
\end{proof}

\begin{theorem}\label{thm C}
Let $L/K$, $b$, $(P,e)$ and $\calF\subseteq\calFtilde$ be as in Theorem~\ref{thm A}. The fusion system $\calFtilde$ is saturated if and only if the fusion system $\calF$ is saturated and $p$ does not divide $[N_G(P,\etilde):N_G(P,e)]=[\Gamma_b:\Gamma_e]=[K(e):K(b)]$. In particular, if moreover $L$ is a splitting field for $LC_G(P)e$, then $\calFtilde$ is saturated if and only if $p$ does not divide $[N_G(P,\etilde):N_G(P,e)]=[\Gamma_b:\Gamma_e]=[K(e):K(b)]$.
\end{theorem}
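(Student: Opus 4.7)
The plan is to reduce the saturation question to the Sylow axiom and then do a straightforward index calculation on automorphism groups of $P$. By Theorem~\ref{thm B}, the extension axiom holds for every block fusion system regardless of the coefficient field, so $\calF$ (respectively $\calFtilde$) is saturated if and only if its Sylow axiom holds.

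The next step is to pin down the three relevant groups. I would verify that
\[
  \Aut_\calF(P) = N_G(P,e)/C_G(P), \qquad \Aut_\calFtilde(P) = N_G(P,\etilde)/C_G(P), \qquad \Aut_P(P) = PC_G(P)/C_G(P),
\]
using (i) that any morphism $P\to P$ in a fusion system is an automorphism of $P$, (ii) the definition of the morphisms in $\calF$ and $\calFtilde$ combined with the maximality of $(P,e)$ and $(P,\etilde)$ (so that the relation $\lexp{x}{(P,e)}\le(P,e)$ forces equality, and similarly for $\etilde$), and (iii) the fact that $C_G(P)$ is contained in both $N_G(P,e)$ and $N_G(P,\etilde)$ since inner conjugation fixes every central idempotent of $kC_G(P)$. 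Note also that $P\subseteq N_G(P,e)$ because $e$ is $P$-stable by Theorem~\ref{thm Brauer pairs}(b). Thus
\[
  \Aut_P(P) \le \Aut_\calF(P) \le \Aut_\calFtilde(P),
\]
and by Proposition~\ref{prop Goursat invariants}(c) the outer index satisfies
\[
  [\Aut_\calFtilde(P):\Aut_\calF(P)] = [N_G(P,\etilde):N_G(P,e)] = [\Gamma_b:\Gamma_e] = [K(e):K(b)].
\]

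The conclusion is then pure Sylow theory. Since $\Aut_P(P)$ is a quotient of the $p$-group $P$, it is a Sylow $p$-subgroup of $\Aut_\calFtilde(P)$ if and only if $[\Aut_\calFtilde(P):\Aut_P(P)]$ is coprime to $p$; using the factorization
\[
  [\Aut_\calFtilde(P):\Aut_P(P)] = [\Aut_\calFtilde(P):\Aut_\calF(P)]\cdot[\Aut_\calF(P):\Aut_P(P)],
\]
this is coprime to $p$ if and only if both factors are coprime to $p$. The second factor is coprime to $p$ precisely when the Sylow axiom holds for $\calF$, and the first factor equals $[\Gamma_b:\Gamma_e]$. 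This yields the stated equivalence. For the \emph{in particular} clause, if $L$ is a splitting field for $LC_G(P)e$ then $\calF$ is automatically saturated by Theorem~\ref{thm splitting field implies saturated}, so the criterion collapses to the divisibility condition alone.

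There is no real obstacle: the heavy lifting has already been done in Theorem~\ref{thm A} (which produces the chain $\Aut_P(P)\le\Aut_\calF(P)\le\Aut_\calFtilde(P)$ with controlled index) and Theorem~\ref{thm B} (which disposes of the extension axiom). The only point requiring a small verification is the identification $\Aut_\calFtilde(P)=N_G(P,\etilde)/C_G(P)$, and this is immediate from the definition once one observes that $C_G(P)$ stabilizes $\etilde$.
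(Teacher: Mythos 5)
Your proof is correct and follows essentially the same route as the paper: reduce to the Sylow axiom via Theorem~\ref{thm B}, identify $\Aut_\calF(P)=N_G(P,e)/C_G(P)$ and $\Aut_{\calFtilde}(P)=N_G(P,\etilde)/C_G(P)$, and conclude by the index computation from Proposition~\ref{prop Goursat invariants}(c). The paper's proof is just a more compressed version of the same argument.
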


\begin{proof}
Note that the map $N_G(P,e)\to\Aut_{\calF}(P)$, $g\mapsto c_g$ induces an isomorphism $N_G(P,e)/C_G(P)\to \Aut_{\calF}(P)$ which maps $PC_G(P)/C_G(P)$ to $\Aut_P(P)$. Thus, the Sylow axiom holds for $\calF$ if and only if $p\nmid[N_G(P,e):PC_G(P)]$. Similarly, the Sylow axiom holds for $\calFtilde$ if and only if $p\nmid[N_G(P,\etilde):PC_G(P)]$.
By Theorem~\ref{thm B} it suffices to show that the Sylow axiom holds for $\calFtilde$ if and only it holds for $\calF$ and $p\nmid[\Gamma_b:\Gamma_e]$. But, by Proposition~\ref{prop Goursat invariants}(c), one has $[\Gamma_b:\Gamma_e]=[N_G(P,\etilde):N_G(P,e)]=[K(e):K(b)]$ which implies the result.
\end{proof}

Next we will show that a weak form of Alperin's fusion theorem holds for arbitrary block fusion systems. 

\begin{definition}
Let $\calF$ be a fusion system over a $p$-group $P$. 
We say that {\em Alperin's weak fusion theorem holds for $\calF$} if $\calF=\langle \Aut_\calF(Q)\mid Q\in\calC\rangle$, where $\calC$ is the set  of subgroups of $P$ which are $\calF$-centric and fully $\calF$-normalized.
\end{definition}

\begin{theorem}\label{thm D}
Let $k$ be a field of characteristic $p$ and let $c$ be a block idempotent of $kG$. Then Alperin's weak fusion theorem holds for the fusion system of $kGc$, for any choice of maximal $kGc$-Brauer pair.
\end{theorem}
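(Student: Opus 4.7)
The plan is to lift the weak Alperin theorem for the saturated fusion system~$\calF$ across the inclusion $\calF\subseteq\calFtilde$ provided by Theorem~\ref{thm A}. Imitating the setup from the proof of Theorem~\ref{thm B}, I would first fix a maximal $kGc$-Brauer pair $(P,f)$, choose a finite Galois extension $L/k$ with Galois group $\Gamma$ such that $L$ is a splitting field for $kC_G(P)f$, and then pick a block idempotent $b$ of $LG$ with $bc\ne 0$ together with a maximal $LGb$-Brauer pair $(P,e)$ satisfying $ef=e$, so that $\btilde=c$ and $\etilde=f$. Since $LC_G(P)e$ is a block of the split algebra $LC_G(P)f$ and hence itself split, Theorem~\ref{thm splitting field implies saturated} guarantees that $\calF:=\calF_{(P,e)}(LGb)$ is saturated, and $\calFtilde=\calF_{(P,f)}(kGc)$ by Corollary~\ref{cor F and Ftilde}.

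Next, I would invoke the classical Alperin--Goldschmidt fusion theorem for saturated fusion systems, which expresses $\calF$ as the fusion system generated by $\Aut_\calF(P)$ together with $\Aut_\calF(Q)$ for $Q$ running over the $\calF$-essential subgroups of $P$. Since $\calF$-essential subgroups are always $\calF$-centric and fully $\calF$-normalized, and $P$ itself trivially lies in both classes, this yields the weak form $\calF=\langle\Aut_\calF(Q)\mid Q\in\calC_\calF\rangle$, where $\calC_\calF$ denotes the set of $\calF$-centric and fully $\calF$-normalized subgroups of $P$. Proposition~\ref{prop calF calFtilde centralized}(b,c) then identifies $\calC_\calF$ with the analogous collection $\calC_{\calFtilde}$ for $\calFtilde$; I will write $\calC$ for this common set, so that
\begin{equation*}
  \calF \;=\; \langle\Aut_\calF(Q)\mid Q\in\calC\rangle \;\subseteq\; \langle\Aut_{\calFtilde}(Q)\mid Q\in\calC\rangle.
\end{equation*}

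Finally, Theorem~\ref{thm A} gives $\calFtilde=\langle\calF,\sigma\rangle$ for some $\sigma\in\Aut_{\calFtilde}(P)$, and $P\in\calC$ is automatic (no proper subgroup of $P$ can have the same order as $P$, so the centric condition on $P$ is vacuous, and $P$ is trivially fully $\calFtilde$-normalized). Hence $\sigma$ already belongs to $\langle\Aut_{\calFtilde}(Q)\mid Q\in\calC\rangle$, and one concludes
\begin{equation*}
  \calFtilde \;=\; \langle\calF,\sigma\rangle \;\subseteq\; \langle\Aut_{\calFtilde}(Q)\mid Q\in\calC\rangle \;\subseteq\; \calFtilde,
\end{equation*}
which proves Alperin's weak fusion theorem for $\calFtilde$. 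The only external ingredient is the classical weak Alperin theorem for saturated fusion systems, so there is no substantial obstacle; once Theorem~\ref{thm A} and Proposition~\ref{prop calF calFtilde centralized} are in hand, the deduction reduces to essentially a three-step chase through the generating set.
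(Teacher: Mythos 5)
Your proposal is correct and follows essentially the same route as the paper: reduce to the saturated fusion system $\calF$ over the splitting field via Theorem~\ref{thm A}, invoke Alperin's fusion theorem for saturated fusion systems, transfer the collection $\calC$ using Proposition~\ref{prop calF calFtilde centralized}, and absorb $\sigma$ into $\Aut_{\calFtilde}(P)$. The only cosmetic difference is that you derive the weak form from the Alperin--Goldschmidt statement with essential subgroups and make explicit why $\sigma$ lies in the generated system, both of which the paper leaves implicit.
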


\begin{proof}
Set $K:=k$ and choose $L$, $b$, $(P,e)$ as in the proof of Theorem~\ref{thm B} with $c=\btilde$ and apply Theorem~\ref{thm A} to this situation with $\calF:=\calF_{(P,e)}(LGb)$ and $\calFtilde:=\calF_{(P,\etilde)}(KG\btilde)$. We need to show that Alperin's weak fusion theorem holds for $\calFtilde$. Since $\calF$ is saturated, Alperin's weak fusion theorem holds for $\calF$, see for instance \cite[Theorem~8.2.8]{Linckelmann2018}. Thus, $\calF=\langle \Aut_\calF(Q)\mid Q\in\calC\rangle$, where $\calC$ denotes the set of subgroups of $P$ which are $\calF$-centric and fully $\calF$-normalized. Moreover, by Proposition~\ref{prop calF calFtilde centralized}, $\calC$ is equal to the set $\calCtilde$ of subgroups of $P$ which are $\calFtilde$-centric and fully $\calFtilde$-normalized. Thus, by Theorem~\ref{thm A}, we have 
\begin{equation*}
  \calFtilde=\langle \calF,\sigma\rangle = \langle\{\Aut_\calF(Q)\mid Q\in\calC\}\cup\{\sigma\}\rangle \subseteq \langle \Aut_{\calFtilde}(Q)\mid Q\in\calC\rangle \subseteq \calFtilde\,.
\end{equation*}
But this implies $\calFtilde=\langle \Aut_{\calFtilde}(Q)\mid Q\in\calC\rangle=\langle \Aut_{\calFtilde}(Q)\mid Q\in\calCtilde\rangle$, which  means that Alperin's weak fusion theorem holds for $\calFtilde$.
\end{proof}



\begin{thebibliography}{0000}

\bibitem[AKO11]{AKO2011} 
   {\sc M.~Aschbacher, R.~Kessar, B.~Oliver:} 
   Fusion systems in Algebra and topology.
   London Mathematical Society Lecture Note Series, 391. Cambridge University Press, Cambridge, 2011.
\bibitem[B10]{Bouc2010a}{\sc S.~Bouc:} 
   Biset functors for finite groups.
   Lecture Notes in Mathematics, 1990. Springer-Verlag, Berlin, 2010.
\bibitem[L18]{Linckelmann2018} 
   {\sc M.~Linckelmann:} 
   The block theory of finite group algebras. Vol.~II. 
   London Mathematical Society Student Texts, 92. Cambridge University Press, Cambridge, 2018.


\end{thebibliography}
\end{document}